\documentclass[10pt]{article}
\usepackage{pgfplots}
%\pgfplotsset{compat=1.15}
\usepackage{mathrsfs}
\usetikzlibrary{arrows}
\usepackage{amscd,amsmath,amsfonts,amssymb,amsthm,cite,mathrsfs,color} 
\usepackage{dsfont} \usepackage{leftidx}
\usepackage{tikz} \usepackage{enumerate}
\usepackage{appendix}

\usepackage{hyperref}

\usepackage{geometry}

\usepackage{graphicx} \usepackage{xcolor}
%\usepackage{refcheck}
%\usepackage{slashed}
%\usepackage[hyperfootnotes=true,
%        pdffitwindow=true,
%        plainpages=false,
%        pdfpagelabels=true,
%        pdfpagemode=UseOutlines,
%        pdfpagelayout=SinglePage,
%        hyperindex,]{hyperref}

\numberwithin{equation}{section} % Equations numbered according to sections

% -----------------------------------------

\usepackage{amsthm}

\theoremstyle{plain}
\newtheorem{theorem}{Theorem}[section]
\newtheorem{lemma}[theorem]{Lemma}

\newtheorem{proposition}[theorem]{Proposition}

\theoremstyle{definition}
\newtheorem{definition}[theorem] 
{Definition}%[section]

\newtheorem{remark}[theorem]{Remark}%[section]

\newtheorem*{acknow}{Acknowledgments}
\begin{document}
	
	\title{\bf{Local 
			limit theorems for random walks on a  large discrete torus  }}
	
	\author{
		Yandong Gu\footnotemark[1],   ~Dang-Zheng Liu\footnotemark[1] ~ %and    Guangyi Zou\footnotemark[1]
	}
	\renewcommand{\thefootnote}{\fnsymbol{footnote}}
	%\footnotetext[1]{CAS Key Laboratory of Wu Wen-Tsun Mathematics, School of Mathematical Sciences, University of Science and Technology of China, Hefei 230026, P.R.~China. E-mail: dzliu@ustc.edu.cn}
	\footnotetext[1]{School of Mathematical Sciences, University of Science and Technology of China, Hefei 230026, P.R.~China. E-mail: gd27@mail.ustc.edu.cn, dzliu@ustc.edu.cn
		%	, zouguangyi2001@mail.ustc.edu.cn
	}

	\maketitle
	%--------------------------------------------------------------------------------------------------
	\begin{abstract}
		Inspired by the study of edge statistics of random band matrices, we  investigate random walks on  large $d$-dimensional periodic  lattices, whose   transition matrices are determined  by   discretized   density functions.  Under certain moment assumption  on the density,  we prove local limit theorems for random walks  in three different regimes  according to   the  bandwidth parameter, random walk length  and torus size.

		% that statisfies either specific moment conditions or a power-law decay at infinity  with an exponent 
		%$\alpha \in(0,2)$. Our analysis uncovers  three distinct types of local limit theorems, distinguished by the parameters of spread-out parameter $W$, walk length $n$, and torus size $L$.  %Notably, these limit theorems display similarities to those observed in random walks with Gaussian and $\alpha$-stable transition probabilities.
		
	\end{abstract}
	%	\date{\today}
	
	%	\keywords{Random Matrix Theory}
	%		\tableofcontents
	\section{Introduction}
	
	\subsection{Motivation}

	Given a  $d$-dimensional lattice
	\begin{equation} \label{latt}
		\Lambda_{L,d}=\Big( \big(-\frac{1}{2}L,\frac{1}{2}L\big] %\times \cdots \times\big(-\frac{1}{2}L_d,\frac{1}{2}L_d\big]
		\bigcap \mathbb{Z}\Big)^d, 
	\end{equation}%\Lambda_{L,d}=\prod_{j=1}^{d}\left(\left(-\frac{L_j}{2},\frac{L_j}{2}\right]\cap \mathbb{Z}\right) %%\left(\Big(-\frac{L_1}{2},\frac{L_1}{2}\Big] \times \cdots \times\Big(-\frac{L_d}{2},\frac{L_d}{2}\Big]\right) \bigcap \Large{\mathbb{Z}^d}
	%and treat it as a discrete torus     equipped with a periodic $l^p$ ($p\geq1$)  norm   
	% \begin{equation} |x-y|_{p}: = \Big(\sum_{j=1}^d \big(|x_j-y_j|\wedge (L_j-|x_j-y_j|)\big)^p\Big)^{1/p}. \end{equation} 
	by defining  a  canonical representative
	of $x\in  \Lambda_{L,d} $ through
	\begin{equation} [x]_L: = \Big(\big(x_1+L\mathbb{Z}\big)\times \cdots \times \big(x_d+L\mathbb{Z}\big)\Big)\bigcap  \Lambda_{L,d}, \end{equation} 
	we  introduce   a periodic  distance  on $\Lambda_{L,d}$  for any  $l^p$ ($p\geq1$) norm   
	\begin{equation} \|x-y\|_{p}: = \|[x-y]_L\|_{p}, \end{equation} 
	% and  always assume that all  
	%$L_j=[\alpha_j L_0],\alpha_j\in (0,\infty)$
	%for some integer $L_0$,
	and  set %abbreviate 
	\begin{equation}N:= |\Lambda_{L,d}|=L^d.%\prod_{j=1}^d  L_j.
	\end{equation}
	% Throughout the article£¬  
	%A symmetric or Hermitian random band matrix can be defined as 
	%\begin{equation}
	%	H=(H_{xy})_{x,y \in \Lambda_{L,d}},\quad H_{xy}:=\sigma_{xy}A_{xy},
	%\end{equation}
	%where $A_{xy}$ are independent random variables up to symmetry. The variance $\sigma_{xy}^2$ depends on the distance between $x$ and $y$, and typically decays with distance on a length scale $W$, called the bandwidth of $H$.
	%\begin{definition}[Unimodular RBM]
	So we can define a random band matrix on $\Lambda_{L,d}$, one of the most prominent  matrix models in Random Matrix Theory.  	A symmetric or Hermitian matrix $H=(H_{xy})_{x,y \in \Lambda_{L,d}}$ with $H_{xy}= \sigma_{xy}A_{xy}$ is  said to be   an unimodular random band matrix with bandwidth $W$ and variance profile
	\begin{equation}
		\sigma_{xy}^2=\frac{1}{\Omega_W}f\Big(\frac{[x-y]_L}{W}\Big),\quad \Omega_W=\sum_{0\neq x \in \Lambda_{L,d}}f\big(\frac{x}{W}\big),
	\end{equation}
	if  the following three  assumptions hold:
	(1) \{$A_{xy}$\} are independent up to Hermitian  symmetry;
	(2) \{$A_{xy}$\} are uniform on \{+1,-1\} ($\beta=1$) or \{$e^{i \theta},\quad \theta \in [0,2\pi) $\}($\beta=2$);
	(3) $f(x)$ is a symmetric probability density function ($f(-x)=f(x)$).
	%\end{definition}

	Local statistics   for extreme eigenvalues of random band matrices  is one of the core problems in Random Matrix Theory.  An effective approach of studying it  is to use the modified moment method \cite{lz23,sodin10,soshnikov99}, in which 
	a crucial step  is to  obtain  local limit theorems and upper bounds of transition kernels for random walks on  large $d$-dimensional periodic  lattices.  % Moreover, we can define the cutoff random band matrices with 
	%$\sigma_{xy}=1$ if  $0<\|x-y\|_p\leq W$, $\sigma_{xy}=0$ if  $\|x-y\|_p> W$ for $x,y \in \Lambda_{L,d}$. 
	%For the bandwidth parameter $W$%( bandwidth)
	%, which may depend on $L$, define  a  graph $\mathscr{G}=(\Lambda_{L,d},E)$ such that     $(u,v)\in E$ if and only if $0< \|u-v\|_p \leq W$. 
	%where $|\cdot|$ denotes the periodic distance on $\Lambda_{L,d}$. 
	%we can define the distance by different norms such as $l^p,p\geq1$ and $l^{\infty}$ norm, but in fact they are equivalent for limiting behaviour. Note that $|x|_{l^p}=(\sum_{i=1}^d|x_i|^p)^{1/p}$.
	%For convenience, we use the $l^{\infty}$ norm $|x|=max_{1\leq i\leq d}\{|x_1|,|x_2|,\cdots, |x_d|\}$.
	%In the whole article,we usually use the $l^\infty$ norm. 
	More specifically,    random walks on the torus  can be defined as  follows.  
	
	\begin{definition}(Circulant random walk)\label{def-CRM}
		Given    a symmetric   density function  $f(x)$   on $\mathbb{R}^d$,  $S_n$ is said to be a circulant random walk %(CRW for short)
		with   profile $f$ and  bandwidth parameter $W>0$  on the  torus  $\Lambda_{L,d}$      if it 
		is  a  Markov
		chain with transition matrix  
		\begin{equation}\label{defp1} %p_1(y-x):=
			p_1(x,y)=\frac{1}{\Omega_W}f\Big(\frac{[y-x]_L}{W}\Big), \quad x\neq y \in \Lambda_{L,d},  \end{equation}
		where \begin{equation}
			\Omega_W=\sum_{0\neq x\in \Lambda_{L,d}}f\left(\frac{x}{W}\right).
		\end{equation}
		%and $\frac{x}{W}=(\frac{x_1}{W},\frac{x_2}{W},\cdots,\frac{x_d}{W})$.
		The $n$-step  transition probability between $0$ and $x$ in  $\Lambda_{L,d}$ is denoted by $p_n(x):=p_n(0,x)$.\end{definition}

	It is worth  stressing  that  $S_n$ is not only  temporally homogeneous  but also  spatially homogeneous. 
	Very interesting examples  for  densities    include  uniform distributions on hypercubes  and spheres   (cut-off functions) with $ r\in (0,\infty)$
	\begin{equation}
		f(x)=\prod_{i=1}^d \frac{1}{2r}1_{\{|x_i|\leq r\}},  
	\end{equation}
	\begin{equation}
		f(x)= \frac{\Gamma(\frac{d}{2}+1)}{\pi^{\frac{d}{2}}r^d}1_{\{\|x\|_2\leq r\}}, 
	\end{equation}
	and the Gaussian density 
	\begin{equation}
		f(x)= \frac{1}{(\sqrt{2\pi})^ d} e^{-\frac{1}{2}\|x\|_{2}^2} , \quad x\in \mathbb{R}^d.
	\end{equation}
	
	Random walk   is one of the most basic
	topics in probability theory and  has  fundamental  applications in   various fields of mathematics and physics \cite{ll10}.  Particularly, it may play a   significant  role in the study of  spectral properties  and eigenvector  statistics  in  Random Matrix Theory,   which is reflected in  using  moment method and moment estimate; see e.g.  \cite{Ek11b,EK11,fs10,lz23,sodin10 ,soshnikov99}.

	Our  circulant random walk in Definition \ref{def-CRM} is related to   geometric structure  of the underlying graphs or groups.  There are a lot of works in the literature,   which  are devoted to   limit theorems  and heat kernel bounds for   random walks  on graphs and groups,  see  e.g.  \cite{bar17,bq16,cs08,cs15,dp88,wo00} and references therein.    Particularly,  see  \cite{chen16,ckw20} for  recent results about upper or lower bounds of  heat kernels.
	%For various limiting theorems of   high-dimensional random walks, we refer to \cite {zm22,zmr24}.
	However,  as far as we know,   there are very  few  works related to  limit theorems and heat kernel bound   for the circulant random walk with a large  parameter  $W$.

	{\bf Notation.}  $f(x)\asymp g(x)$ means  that there are two positive constants $C_1$ and $C_2$ such that 
	$C_1 g(x)\leq f(x) \leq C_2 g(x)$ for any $ x\in \mathbb{R}^d$.  For real numbers $a,b$,  $a  \wedge  b:=\min\{a,b\}$  and  $a \vee b:=\max\{a,b\}$.
	
	\subsection{Main results}
	
	In order to state the main results, we need the well-known  theta functions  characterized by the $\alpha$-stable densities. \begin{definition}($\alpha$-stable theta functions)\label{def-tf}
		
		(i) For $\alpha \in(0,2)$, 
		\begin{equation}
			\theta_{\alpha}(x,\tau)
			=\sum_{k\in \mathbb{Z}^d} f_{\alpha}(x+k,\tau)
		\end{equation}
		where  $f_{\alpha}(x,\tau):=\tau^{-\frac{d}{\alpha}}f_{\alpha}(\tau^{-\frac{1}{\alpha}}x)$, and $f_{\alpha}(x)$ is the density function of an isotropic multivariate $\alpha$-stable distribution with characteristic function 
		%$\phi(t)=\exp\{-\frac{1}{2}t^{T}\Sigma t\}$ if $\alpha =2$ and 
		$\phi(t)=\exp\{-C_{\alpha}\|t\|_2^{\alpha}\}$ with $C_{\alpha}>0$.
		In fact, one can define $f_{\alpha}$ for any $C_{\alpha}>0$, and for convenience, here we take the fixed $C_{\alpha}$ from
		\eqref{plorder}.
		
		(ii) Given a positive definite matrix $\Gamma$ of size $d$,  the multidimensional Jacobi theta function is defined as
		\begin{equation}\label{theta}
			\theta(z,\Gamma)=
			\sum_{k\in \mathbb{Z}^d} \exp \{-\pi 
			<k\Gamma,k> +2\pi i<k,z> \},
		\end{equation}	where  $<\cdot,\cdot>$ denotes the inner product.
	\end{definition}
	%When $\alpha=2$, $f_2$ is the Gaussian profile.

	%(spread0-out power law models)

	\begin{theorem}\label{pllt} For the random walk given  in  Definition \ref{def-CRM}, 
		assume    that $f$ is continuous   at $x_0$ and $f(x_0)>0$ for  some fixed  $x_0\in [0,\frac{L}{2W})^d$, and also  %there is an $l<\infty$ such that 
		\begin{equation}\label{pldef}
			%f(x)= \frac{1}{Z_{\alpha}}
			f(x)\asymp  \big(\|x\|_2 \vee1 \big)^{-d-\alpha}, %(|x|\geq l)
		\end{equation}
		where $\alpha \in  (0,2)$. % denotes the power-law decay expenont and  $Z_{\alpha}$ is the normalization constant.
		As $L\to \infty $ and $W\to\infty$, the following three types of local  limits  hold.
		\begin{itemize}
			\item [(I)](Subcritical regime)  When $\log W\ll n \ll (L/W)^{\alpha}$,  
			\begin{equation}
				p_n(x)=	f_{\alpha}(x,nW^{\alpha})(1+o(1)).
			\end{equation}
			%where $f_{\alpha}(x,nW^{\alpha})={n^{-\frac{d}{\alpha}}W^{-d}}f_{\alpha}((nW^{\alpha})^{-1/ \alpha}x)$
			\item[(II)] (Critical regime)
			When $ n \sim (L/W)^{\alpha}$ and $n\gg \log W$, 
			\begin{equation}
				p_n(x)	=\frac{1}{N}  {\theta}_{\alpha}\Big(\frac{x}{L},   \frac{n W^{\alpha}}{L^{\alpha}} \Big)\big(1+o(1)\big),
			\end{equation}

			\item[(III)] (Supercritical regime) When  $n\gg (L/W)^{\alpha}$ and $n\gg \log L$,
			\begin{equation}	
				p_n(x)	=\frac{1}{N}\big(1+o(1)\big).
			\end{equation}
		\end{itemize}	
	\end{theorem}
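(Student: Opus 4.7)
The natural tool is discrete Fourier analysis on the finite abelian group $\Lambda_{L,d}\cong(\mathbb{Z}/L\mathbb{Z})^d$, since by Definition~\ref{def-CRM} the kernel $p_1(x,y)$ depends only on $[y-x]_L$. Writing
\begin{equation*}
    p_n(x)=\frac{1}{N}\sum_{k\in\Lambda_{L,d}}\widehat{p}_1(k)^{\,n}\,e^{2\pi i\langle k,x\rangle/L},\qquad
    \widehat{p}_1(k)=\sum_{y\in\Lambda_{L,d}} p_1(y)\,e^{-2\pi i\langle k,y\rangle/L},
\end{equation*}
the entire problem reduces to controlling $\widehat{p}_1(k)^{\,n}$ on $\Lambda_{L,d}$. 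The plan is to analyse $\widehat{p}_1$ on a small-frequency window $\|k\|_2\ll L/W$ and on its complement, and to see how the three cutoffs in $n$ pick out the three limits.

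On the small-frequency window I expect the $\alpha$-stable expansion
\begin{equation*}
    \widehat{p}_1(k)=1-C_\alpha\bigl(W\|k\|_2/L\bigr)^{\alpha}\bigl(1+o(1)\bigr),
\end{equation*}
with $C_\alpha$ the constant fixed in Definition~\ref{def-tf}(i). Indeed, $p_1(y)=\Omega_W^{-1}f(y/W)$ is, up to the normalization $\Omega_W\sim W^d$, a Riemann-sum sampling of the density $f$ with mesh $1/W$, so by the usual generalized central limit theorem computation the tail hypothesis \eqref{pldef} promotes the quadratic term $\|\xi\|^2$ into the non-smooth term $\|\xi\|^{\alpha}$; the remaining work is to make the Riemann approximation effective in $W$ and to verify that the $y=0$ exclusion in $\Omega_W$ is a harmless $O(W^{-d})$ correction.

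For the frequencies outside this window I need an aperiodicity estimate $|\widehat{p}_1(k)|\le 1-\delta(k)$ valid throughout $\Lambda_{L,d}\setminus\{0\}$ and quantitative enough to absorb the $L^d$ terms under the standing conditions $n\gg\log W$ and $n\gg\log L$. Here I would exploit that $f$ is continuous at $x_0$ with $f(x_0)>0$: this implies $p_1(y)\ge c/\Omega_W$ on a macroscopic box around $Wx_0$, and the identity $1-\mathrm{Re}\,\widehat{p}_1(k)=\sum_y p_1(y)\bigl(1-\cos(2\pi\langle k,y\rangle/L)\bigr)$ converts this positivity into the required spectral gap. I expect this step to be the main obstacle: because $f$ has only a power-law tail and no smoothness beyond continuity at $x_0$, $|\widehat{f}|$ need not decay nor be bounded away from $1$ uniformly at infinity, so the bound has to be made quantitative in $W\|k\|_2/L$, most naturally in the form $1-|\widehat{p}_1(k)|\gtrsim\min\{(W\|k\|_2/L)^\alpha,\,1\}$, in order to beat the $L^d$ counting.

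With these two inputs the three regimes assemble as follows. In the subcritical regime the effective cutoff $\|k\|_2\sim L/(Wn^{1/\alpha})$ lies strictly inside the small-frequency window, so by dominated convergence
\begin{equation*}
    \frac{1}{N}\sum_{k\in\Lambda_{L,d}}e^{-nC_\alpha(W\|k\|_2/L)^{\alpha}}e^{2\pi i\langle k,x\rangle/L}\longrightarrow\int_{\mathbb{R}^d}e^{-nC_\alpha(W\|\xi\|_2)^{\alpha}}e^{2\pi i\langle\xi,x\rangle}\,d\xi=f_\alpha(x,nW^{\alpha}),
\end{equation*}
and the large-frequency tail together with the errors in the stable expansion is lower order once $n\gg\log W$. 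In the critical regime $n\sim(L/W)^\alpha$ the sum cannot be replaced by an integral; applying Poisson summation, $N^{-1}\sum_{k}e^{-nC_\alpha(W\|k\|_2/L)^{\alpha}}e^{2\pi i\langle k,x\rangle/L}$ is recognized as the periodization $N^{-1}\theta_\alpha(x/L,nW^\alpha/L^\alpha)$. In the supercritical regime the $k=0$ term contributes $1/N$ exactly, every $k\ne 0$ satisfies $\|k\|_2\ge 1$ so $\sum_{k\ne 0}e^{-nC_\alpha(W\|k\|_2/L)^\alpha}=o(1)$ since $n(W/L)^\alpha\to\infty$, and $n\gg\log L$ kills the bulk contribution through the aperiodicity bound. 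The main bookkeeping task is to verify that the three error sources, stable expansion, Riemann-to-integral replacement, and aperiodicity tail, are all genuinely $o(1)$ (relative to the leading term) under the precise moment conditions stated.
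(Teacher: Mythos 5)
Your proposed route --- inverse discrete Fourier transform, a stable-law expansion of $\hat p_1$ on the window $\|k\|_2\le\delta L/W$ paired with a uniform spectral gap outside it, then a small/large frequency split and Poisson summation --- is exactly the paper's route (Lemma~\ref{plf} together with the proof of Theorem~\ref{pllt}), so the architecture is right. Two places where the execution in the paper is sharper than your sketch, both in the same plan. First, the subcritical sum-to-integral step is not a dominated-convergence argument: for the oscillatory summand $e^{-nC_\alpha(W\|k\|_2/L)^\alpha}e^{2\pi i\langle k,x\rangle/L}$ the naive Riemann error is controlled by the gradient, which grows like $\|x\|_2$, and $\|x\|_2$ can be of order $L$; the resulting relative error $\sim\frac{\|x\|_2}{L}\bigl(\|x\|_2/(nW^\alpha)^{1/\alpha}\bigr)^{d+\alpha}$ is unbounded when $\|x\|_2\sim L$ and $nW^\alpha\ll L^\alpha$. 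The paper therefore uses Poisson summation already in the subcritical regime, writing the window sum as $n^{-d/\alpha}W^{-d}\sum_{m\in\mathbb Z^d}f_\alpha\bigl((x+mL)/(n^{1/\alpha}W)\bigr)$ plus an explicitly controlled cutoff error, and then observes that the $m=0$ alias dominates precisely because $nW^\alpha\ll L^\alpha$; the critical regime keeps the whole theta sum. Your remark that Poisson is needed only in the critical case understates where the work actually is. Second, your proposed spectral-gap identity $1-\mathrm{Re}\,\hat p_1(k)=\sum_y p_1(y)\bigl(1-\cos(2\pi\langle k,y\rangle/L)\bigr)$ only yields an upper bound on the real number $\hat p_1(k)$ (real because $f$ is symmetric), whereas $|\hat p_1(k)|\le\rho<1$ also requires $\hat p_1(k)\ge-\rho$; the paper's Dirichlet-kernel estimate $|h(k_j/L)|\le\tilde\rho<1$ over the box of side $\sim\epsilon_0 W$ around $Wx_0$ delivers both sides at once. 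These are refinements within your plan, not a different approach.
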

	\begin{proposition}\label{p-uub}
		Without any further restrictions on $W$, $n$ and  $x$, there is a constant $C>0$ such that
		\begin{equation}\label{pluub}
			p_n(x)\leq C\left(\frac{1}{n^{\frac{d}{\alpha}}W^d}\wedge \frac{nW^{\alpha}}{(\|x\|_2 \vee W) ^{d+\alpha}}+\frac{1}{N} \right).
		\end{equation}
	\end{proposition}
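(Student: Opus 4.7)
The strategy is to establish two independent bounds on $p_n(x)$---a diffusive one and a heavy-tail ``one big jump'' one---and take their minimum, the additive $1/N$ in each case being the equilibrium contribution from the trivial Fourier mode. Concretely, I aim to show separately that
\begin{equation*}
p_n(x)\leq\frac{C}{n^{d/\alpha}W^d}+\frac{C}{N}\qquad\text{and}\qquad p_n(x)\leq\frac{CnW^\alpha}{(\|x\|_2\vee W)^{d+\alpha}}+\frac{C}{N}.
\end{equation*}

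The natural starting point is Fourier inversion on the discrete torus,
\begin{equation*}
p_n(x)=\frac{1}{N}\sum_{\xi\in\Lambda_{L,d}}\widehat{p}_1(\xi)^{n}\,e^{-2\pi i\langle\xi,x\rangle/L},
\end{equation*}
in which the $\xi=0$ term contributes exactly $1/N$. The key analytic input is the characteristic-function estimate
\begin{equation*}
|\widehat{p}_1(\xi)|\leq 1-c(W\|\xi\|/L)^\alpha\ \text{for }W\|\xi\|/L\text{ small},\qquad |\widehat{p}_1(\xi)|\leq 1-c'\ \text{otherwise},
\end{equation*}
which follows from the tail hypothesis \eqref{pldef} together with $\Omega_W\asymp W^d$ via the standard $\alpha$-stable expansion $1-\widehat{f}(t)\asymp\|t\|^\alpha$ near the origin. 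Inserting this into the Fourier sum and comparing the dual-lattice sum to the corresponding integral yields $\sum_{\xi\neq 0}|\widehat{p}_1(\xi)|^n\leq CN/(n^{d/\alpha}W^d)$ by a Riemann-sum estimate, which after dividing by $N$ gives the diffusive bound.

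For the second bound I would invoke the Nagaev-type ``one big jump'' principle. Since $\Omega_W\asymp W^d$, the single-step kernel satisfies $p_1(y)\leq CW^\alpha/(\|y\|_2\vee W)^{d+\alpha}$; truncating at scale $\|x\|_2/2$ into $p_1=p_1^{\mathrm{sm}}+p_1^{\mathrm{lg}}$ and expanding $p_n=p_1^{\ast n}$ multinomially, one distinguishes three contributions. Configurations with no large step are controlled by subgaussian concentration in the truncated kernel and are absorbed into the diffusive bound or $1/N$; configurations with exactly one large step, summed over the $n$ positions at which it can occur, contribute at most $n\cdot\sup_{\|y\|\geq\|x\|_2/2}p_1(y)\lesssim nW^\alpha/\|x\|_2^{d+\alpha}$; configurations with two or more large steps are of lower order. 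For $\|x\|_2\leq W$ the claimed second bound is weaker than the diffusive one and requires no separate treatment.

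The main obstacle is the big-jump step: the small/large decomposition must be executed \emph{uniformly} in all three parameter regimes of Theorem \ref{pllt}, with no relation assumed among $n$, $(L/W)^\alpha$ and $\log W$, and one must be careful that ``large'' refers to the lifted jump size rather than its torus representative $[\,\cdot\,]_L$, which can mask genuinely large displacements. By contrast, once the characteristic-function bound is in hand, the diffusive estimate is essentially a routine Riemann-sum comparison, and the combination into a single minimum is then automatic.
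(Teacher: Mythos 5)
Your overall strategy matches the paper's: establish a diffusive Fourier bound and a one-big-jump heavy-tail bound separately, then take the minimum, with $1/N$ arising from the trivial Fourier mode. The paper's heavy-tail claim is proved by precisely the truncation-at-scale-$\|x\|_2\vee W$ decomposition you describe: paths with all steps small are controlled by a local CLT with truncated variance $\tilde\sigma = O(W^\alpha)(\|x\|_2\vee W)^{2-\alpha}$, and paths with at least one large step are bounded by $n\,p_1(y)$ with $\|y\|_2\gtrsim \|x\|_2\vee W$. So that half of your plan is the paper's plan.

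There is, however, a genuine gap in the diffusive bound as you have stated it. After splitting the Fourier sum at $\|\xi\|_2 = \delta L/W$, the near-origin modes do yield $O\bigl(N/(n^{d/\alpha}W^d)\bigr)$ by the Riemann-sum comparison, but for the far modes you only have $|\hat p_1(\xi)|\le\rho<1$ from Lemma \ref{plf}, giving $\sum_{\|\xi\|_2>\delta L/W}|\hat p_1(\xi)|^n\le N\rho^{\,n}$. Dividing by $N$ leaves $\rho^{\,n}$, which for bounded $n$ (e.g.\ $n=1,2$) is a fixed constant and does not match the required $O\bigl(1/(n^{d/\alpha}W^d)\bigr)=O(W^{-d})$: the Riemann-sum comparison cannot produce the missing $W^{-d}$ factor for small $n$, and the proposition is asserted with no restriction on $n$ at all. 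The paper closes this by writing $|\hat p_1(\xi)|^n\le\rho^{\,n-2}|\hat p_1(\xi)|^2$ for the far modes and using Parseval in the form
\begin{equation*}
\frac{1}{N}\sum_{\xi\in\Lambda_{L,d}}|\hat p_1(\xi)|^2=p_2(0)\le\|p_1\|_\infty=O(W^{-d}),
\end{equation*}
so that the far-mode contribution is $O(\rho^{\,n-2}W^{-d})=O(n^{-d/\alpha}W^{-d})$ uniformly in $n\ge1$. (The paper also handles $n\gg\log L$ separately, directly from Theorem \ref{pllt}, and uses the Fourier argument only for $n\lesssim\log L$.) You would need to incorporate the $p_2(0)$ trick, or some equivalent device, for the diffusive bound to hold uniformly; otherwise the structure of your argument is sound.
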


	\begin{theorem}%[Local limit theorems]
		\label{LLT}
		For the random walk given  in  Definition \ref{def-CRM}, 
		assume   that there is a fixed real number $x_0\in [0,\frac{L}{2W})^d$ such that $f$ is continuous   at $x_0$ and $f(x_0)>0$, and 
		the covariance matrix $\Gamma=(\Gamma_{ij})$ of density $f$ given   by 
		\begin{equation}  
			\Gamma_{ij}= \int_{\mathbb{R}^d} x_i x_j f(x)dx \end{equation} 
		exists and  is positive definite. Set 
		%The covariance  matrix $\Gamma_{L}$  defined by 
		\begin{equation} \label{LCVar}
			(\Gamma_{L})_{ij}
			= \frac{1}{\Omega_W} \sum_{0\neq x\in \Lambda_{L,d}} 
			\frac{x_i x_j}{W^2} f\left(\frac{x}{W}\right),  \ i,j=1, \ldots,L,
		\end{equation}
		and 
		\begin{equation} \label{LVar}\sigma_{L}^2:=\frac{W^2}{L^2} <k\Gamma_L, k>.
			%\frac{1}{\Omega_W} \sum_{0\neq x\in \Lambda_{L,d}} 
			%\sum_{j=1}^ d\left(\frac{x_j k_j}{L_j}\right)^2 f\left(\frac{x}{W}\right)
			%\sum_{j=1}^ d \frac{k_i k_j}{L_i L_j}  (\Gamma_{L})_{ij}.
		\end{equation}
		%$D_L=\mbox{diag}(L, \ldots,L)$ and  $< k, x >:=\sum_{j=1}^d k_j x_j$. 
		As $L \to \infty $ and $W\to\infty$, the following three types of local  limits  hold.
		\begin{itemize}
			\item [(I)](Subcritical regime)  When $\log W\ll n \ll (L/W)^2$ and $\|x\|_2 \ll n^{2/3}W$,
			\begin{equation}
				%	\frac{\mathcal{W}_n(R)}{(\Omega_{W})^n}
				p_n(x)=	
				\frac{1+o(1)}{ (\sqrt{2\pi nW^2\det \Gamma_L})^{d}}   e^{-\frac{1}{2nW^2} 
					<x\Gamma_L^{-1},x> }.%+o(1)\Big).
			\end{equation}
			
			\item[(II)] (Critical regime)
			When $ n \sim (L/W)^2$ and $n\gg \log W$,  and  $\|x\|_2 \ll n^{2/3}W$
			\begin{equation}
				%	\frac{\mathcal{W}_n(R)}{(\Omega_{W})^n}%=(1+o(1))\frac{1}{N} \sum_{k\in \mathbb{Z}^d}e^{\frac{-2C\pi^2n(W+1)(2W+1)}{3}\sum(\frac{k_i}{L_i})^2}e^{-2\pi i \sum \frac{k_ix_i}{L_i}}\\
				p_n(x)	=\frac{1}{N}  \theta\big(\frac{x}{L}%D^{-1}_L
				,  2\pi \frac{n W^2}{L^2}\Gamma_{L}% D^{-1}_L\Gamma_{L}D^{-1}_L
				\big)\big(1+o(1)\big).
			\end{equation}	
			%where $C$ is a constant about the boundary of the lattice and the definition fo $\theta$ function in (2.49).\\
			
			\item[(III)] (Supercritical regime) When  $n\gg (L/W)^2$ and $n\gg \log L$,
			\begin{equation}	%\frac{\mathcal{W}_n(R)}{(\Omega_{W})^n}
				p_n(x)	=\frac{1}{N}\big(1+o(1)\big).%\prod_{j=1}^d\frac{1}{L_j}.
			\end{equation}
			
		\end{itemize}	
		In addition,  it is also assumed     that 
		$\int_{\mathbb{R}^d} \|x\|_{2}^3  f(x)dx<\infty$ in Case (I) and Case (II). 
		%$\int_{\mathbb{R}^d}f(x)\|x\|_2^2dx<\infty$. Set $\gamma_2=\frac{1}{2^dd}\int_{\mathbb{R}^d}f(y)\|y\|_2^2dy$. 
	\end{theorem}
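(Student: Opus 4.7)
The plan is to use Fourier analysis on the finite abelian group $\Lambda_{L,d}$. Since $p_1$ is a convolution kernel, diagonalizing via the characters $\chi_k(y)=e^{2\pi i\langle k,y\rangle/L}$ ($k\in\Lambda_{L,d}$) yields the inversion formula
\[
p_n(x)=\frac{1}{N}\sum_{k\in\Lambda_{L,d}}\widehat{p}_1(k)^n\,e^{-2\pi i\langle k,x\rangle/L},\qquad \widehat{p}_1(k)=\sum_{y\neq 0}\frac{f(y/W)}{\Omega_W}\,e^{2\pi i\langle k,y\rangle/L}.
\]
The term $k=0$ contributes exactly $1/N$, which already furnishes the supercritical main term. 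The task is therefore to control the oscillatory sum over $k\neq 0$; I would split it at a fixed small $\delta>0$ into a low-frequency region $\{\|k\|_\infty W/L\le\delta\}$ and its complement, treated by entirely different techniques.

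On the low-frequency side, using the symmetry $f(-x)=f(x)$, a third-order Taylor expansion of the exponential gives
\[
\widehat{p}_1(k)=1-2\pi^2\sigma_L^2(k)+R(k),\qquad |R(k)|\lesssim \frac{W^3\|k\|_2^3}{L^3}\cdot\frac{1}{\Omega_W}\sum_{y\neq 0}\bigl\|y/W\bigr\|_2^3\,f(y/W),
\]
where the inner Riemann sum is controlled by $\int\|x\|_2^3 f(x)\,dx<\infty$ (hence the third-moment hypothesis in (I) and (II)), and a similar Riemann-sum argument gives $\Gamma_L\to\Gamma$. Exponentiating, $\widehat{p}_1(k)^n\approx \exp(-2\pi^2 n\sigma_L^2(k))$ provided $n|R(k)|=o(1)$; the moderate-deviation hypothesis $\|x\|_2\ll n^{2/3}W$ translates, after the stationary-phase cutoff $\|Wk/L\|_2\lesssim n^{-1/3}$, into precisely such an error estimate.

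In the high-frequency region I would establish a uniform spectral gap $|\widehat{p}_1(k)|\le 1-c(\delta)$ for $\|Wk/L\|_\infty>\delta$. The hypothesis that $f$ is continuous at some $x_0$ with $f(x_0)>0$ is indispensable here: it enables a Riemann-sum approximation of $\widehat{p}_1(k)$ by the continuous Fourier transform $\widehat{f}(Wk/L)/\widehat{f}(0)$, which is then strictly smaller than $1$ in modulus off the origin and bounded away from $1$ on any compact set avoiding the origin. Once the gap is secured, $|\widehat{p}_1(k)|^n\le e^{-c(\delta)n}$, and the condition $n\gg\log W$ (respectively $\log L$) absorbs the $O(L^d)$ terms being summed. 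I expect establishing this uniform gap, while only assuming continuity of $f$ at a single point, to be the principal technical obstacle.

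Assembling the pieces then yields the three limits. In the subcritical regime (I), $nW^2/L^2\to 0$, so after the substitution $\xi=k/L$ the low-frequency sum is a Riemann sum with spacing $1/L$ much finer than the width $1/(W\sqrt{n})$ of the Fourier-side Gaussian $e^{-2\pi^2 nW^2\langle\xi\Gamma_L,\xi\rangle}$; it therefore converges to $\int_{\mathbb{R}^d}e^{-2\pi^2 nW^2\langle\xi\Gamma_L,\xi\rangle-2\pi i\langle\xi,x\rangle}\,d\xi$, the stated Gaussian density. In the critical regime (II), $nW^2/L^2\asymp 1$: the lattice spacing is comparable to the Gaussian width and no Riemann-sum limit exists, but the low-frequency sum is, up to negligible remainder, exactly the defining theta series of $\theta\bigl(x/L,\,2\pi(nW^2/L^2)\Gamma_L\bigr)$. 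In the supercritical regime (III), $nW^2/L^2\to\infty$ makes every $k\neq 0$ low-frequency contribution $o(1)$ using only second moments, while $n\gg\log L$ suppresses the high-frequency tail, leaving the $k=0$ mass $1/N$. The most delicate point remains that the Taylor error must survive up to the edge $\|x\|_2\sim n^{2/3}W$ in (I) and (II), which is exactly why the extra third-moment hypothesis is imposed there.
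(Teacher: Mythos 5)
Your overall architecture coincides with the paper's: diagonalize $p_1$ by discrete Fourier transform, split $k$ at a small $\delta>0$ into low and high frequency, Taylor-expand $\hat{p}_1$ near $k=0$ (which is where the third-moment hypothesis enters), and secure a spectral gap away from $k=0$. Your treatment of the subcritical regime via a Riemann-sum passage to the Gaussian integral $\int e^{-2\pi^2 nW^2\langle\xi\Gamma_L,\xi\rangle-2\pi i\langle\xi,x\rangle}d\xi$ is a legitimate alternative to the paper's route, which instead keeps the low-frequency sum as a theta series throughout and then invokes the Jacobi reciprocity formula to pass from Fourier side to spatial side; the paper's choice has the merit of treating (I) and (II) simultaneously, with (I) emerging as the $k=0$ dominance in the dual theta series, whereas your Riemann-sum argument handles (I) only and would require a separate reasoning for (II).

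The genuine gap is in how you plan to establish the high-frequency spectral gap. You assert that the continuity of $f$ at a single point $x_0$ with $f(x_0)>0$ ``enables a Riemann-sum approximation of $\hat{p}_1(k)$ by the continuous Fourier transform $\hat f(Wk/L)$'' and then rely on $\hat f$ being bounded away from one off the origin. This does not work under the theorem's hypotheses, for two reasons. First, the Riemann-sum approximation $\hat{p}_1(k)\approx\hat f(Wk/L)$ fails when $\|Wk/L\|$ is of order $W$ (recall $k$ ranges over all of $\Lambda_{L,d}$, so the rescaled frequency goes up to $W/2$): the oscillation period of the integrand is then comparable to the lattice spacing $1/W$, and without global smoothness or decay assumptions on $f$ or on $\hat f$ the aliasing error is not small. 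Second, and more fundamentally, continuity of $f$ at a \emph{single} point $x_0$ has nothing to do with the accuracy of a Riemann sum over all of $\mathbb{R}^d$; it cannot justify the approximation you invoke. The paper's actual use of this hypothesis (in the proof of Lemma~\ref{plf}, inherited by Lemma~\ref{lem2parts}) is quite different and purely local: continuity at $x_0$ produces a small cube $\{\|x-x_0W\|_\infty\le\epsilon_0 W\}$ on which $f(\cdot/W)\ge f(x_0)/2$; splitting off the complementary mass and extracting the constant $f(x_0)/2$ on the cube reduces the oscillatory part of $\hat p_1(k)$ to an explicit Dirichlet-kernel product $\prod_j h(k_j/L)$ with $h(y)=\sin((2\lfloor\epsilon_0W\rfloor+1)\pi y)/((2\lfloor\epsilon_0W\rfloor+1)\sin\pi y)$, which is bounded strictly below $1$ once some $|k_j/L|\gtrsim\delta/W$. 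That argument needs no decay of $\hat f$, no global regularity of $f$, and gives the uniform gap directly. A secondary omission: between your Taylor cutoff $\|Wk/L\|\lesssim n^{-1/3}$ and the fixed threshold $\delta$ there is an intermediate band where the cubic Taylor error is no longer $o(1)$ but the spectral gap is not yet of order one; the paper bridges this with the second-moment bound $|\hat{p}_1(k)|\le 1-\pi^2\sigma_L^2$ (the first part of Lemma~\ref{lem2parts}), whose contribution decays like $e^{-cn\eta^2}$, and you would need this or an equivalent device to close the argument.
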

	
	\begin{remark} 
		%\begin{itemize}
		%
		(i)	
		By combining  local limit theorems and upper bounds  in Theorem \ref{pllt} and Proposition \ref{p-uub}  above with  analytical techniques for Feynman graphs  developed by one author of this  paper and Zou \cite{lz23}, one  can  establish the edge statistics for $d$-dimensional 
		unimodular RBM with $\alpha$ power-law variance profile ($\alpha \in (0,2)$). We  refer the  reader to Theorems 1.2,  1.3,  1.4 and their proofs in \cite{lz23} for more details.

		(ii)
		In dimension  $d=1$,  Diaconis has studied   random walks on the circle with finite bandwidth  in the critical and supercritical regimes  in \cite{dp88}. 
		Also in dimension  $d=1$,
		 for the cut-off variance profile  and for  the large bandwidth  Sodin \cite{sodin10} established local limit theorems in both subcritical and supercritical regimes and stated the upper bound (without a detailed proof  for the latter).   In  general dimension  $d\geq 1$,   for the $\alpha$-stable   variance profile   Liu and Zou \cite{lz23} have established   local limit theorems and  upper bounds.    However, the heat kernel upper bound correspondingly to Theorem \ref{LLT}, except for the Gaussian profile   in  \cite[Proposition 3.4]{lz23}, is not available, so we can not    complete  the edge statistics for $d$-dimensional 
		unimodular RBM with   rapidly  decaying   variance profile. 
		
		%\end{itemize}
	\end{remark}

	Our method   is based on the  discrete Fourier transform on Abelian groups \cite{ter99}, however,  we have to  do some   careful  summation estimates because of the existence of the  large bandwidth $W$.  We will prove 
	Theorem \ref{pllt} and Proposition \ref{p-uub} in Section \ref{sect2}, and prove Theorem \ref{LLT} in   Section \ref{sect3}.

	\section{Proofs of Theorem \ref{pllt} and Proposition \ref{p-uub}} \label{sect2}
	
	\subsection{One-step transition probability}
	In order to verify the above theorems, we will make use of (inverse)  discrete Fourier transform on finite  groups.   	Notice   the  recursive  relation for convolutions  
	\begin{equation}
		p_n(x)=p_1*p_{n-1}(x), \  n\ge 1,  \label{convol}
	\end{equation}
	using   the discrete Fourier transform defined   by 
	\begin{equation}
		\hat{p}_n(k)=\sum_{x\in \Lambda_{L,d}} p_n(x)e^{2\pi i\sum _{j=1}^d\frac{k_jxj}{L}}, %\mathbb{Z}^{d}
	\end{equation} 
	we obtain from \eqref{convol} that 
	\begin{equation}
		\hat{p}_n(k)=\hat{p}_1(k)\hat{p}_{n-1}(k).
	\end{equation}
	Hence 
	\begin{equation}
		\hat{p}_n(k)=(\hat{p}_1(k))^n.
	\end{equation}
	Again by the inverse 
	discrete   Fourier transform,  we  get an expression  of $p_n(x)$ in terms of $\hat{p}_1(k)$. 
	\begin{equation}
		p_n(x)%=\frac{1}{N}\sum_{k\in \Lambda_{L,d}}\hat{c}_n(k)e^{-2\pi\sum_{j=1}^{d}\frac{k_jx_j}{L_j}}
		=\frac{1}{N}\sum_{k\in \Lambda_{L,d}}(\hat{p}_1(k))^n  e^{-2\pi i \sum_{j=1}^{d}\frac{k_jx_j}{L }}. \label{IFT}
	\end{equation}
	%And we can study the asymptotic behavior of $c_n(x)$ from $\hat{c}_n(k)$ by descrete . 
	%we study the asymptotic behavior of random walk by $\frac{c_n(x)}{(\Omega_{W})^n}$.
	
	Now we give a key estimate  for  $\hat{p_1}$.
	\begin{lemma}\label{plf}
		For  $\delta>0$ sufficiently small, there exists a constant $C_{\alpha}>0$  and $\rho<1$ such that  
		\begin{equation}\label{plorder}
			1-\hat{p}_1(k)
			=C_{\alpha}\big(W\|k/L\|_2\big)^{\alpha}+ O\Big(\big(W\|k/L\|_2\big)^{2}\Big),
			\quad \forall\, \|k\|_2 \leq \delta \frac{L}{W},
		\end{equation}
		and 
		\begin{equation}\label{plerr}
			\left|\hat{p}_1(k)\right|\leq \rho, \quad \forall\,\|k\|_2>\delta \frac{L}{W}.
		\end{equation}
	\end{lemma}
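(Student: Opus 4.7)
The plan is to exploit the discrete-Fourier structure of $p_1$ together with the power-law decay of $f$. Using the symmetry $f(-x)=f(x)$, I first write
$$1-\hat p_1(k)=\frac{1}{\Omega_W}\sum_{0\neq x\in \Lambda_{L,d}} f(x/W)\bigl(1-\cos(2\pi\langle k,x\rangle/L)\bigr),$$
which is manifestly real and nonnegative. As a preliminary I would show $\Omega_W=W^d(1+o(1))$ via Riemann-sum comparison: $W^{-d}\Omega_W$ is a spacing-$1/W$ Riemann sum of the probability density $f$ over the box $[-L/(2W),L/(2W)]^d$, whose complement contributes only $O((W/L)^\alpha)$ by the tail bound.

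For the expansion \eqref{plorder} in the small-$k$ range $\|k\|_2\le \delta L/W$, set $t:=2\pi Wk/L$ so that $\|t\|_2\lesssim \delta$. I would approximate the discrete sum $W^{-d}\sum_x f(x/W)\bigl(1-\cos\langle t,x/W\rangle\bigr)$ by $\int_{\mathbb{R}^d}f(y)\bigl(1-\cos\langle t,y\rangle\bigr)\,dy=1-\hat f(t)$, where $\hat f$ is the characteristic function. The target expansion $1-\hat f(t)=C_\alpha\|t\|_2^\alpha+O(\|t\|_2^2)$ follows by the standard split at $\|y\|=1/\|t\|$: on $\{\|y\|\le 1/\|t\|\}$ Taylor-expand $1-\cos$ to second order and use $f(y)\asymp(\|y\|\vee 1)^{-d-\alpha}$ to see the contribution is $O(\|t\|_2^2)$ from the bulk plus a subleading power-law piece; on $\{\|y\|>1/\|t\|\}$ rescale $u=\|t\|y$ to extract the leading $C_\alpha\|t\|_2^\alpha$ term. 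Finally I must verify that the discretization and boundary errors from sum-vs-integral are also $O(\|t\|_2^2)$, which is a routine but careful application of trapezoidal-type bounds combined with the tail control $f\asymp\|y\|^{-d-\alpha}$.

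For the uniform bound \eqref{plerr} in the range $\|k\|_2>\delta L/W$, the strategy is to establish quantitative lower bounds for both $1-\hat p_1(k)$ and $1+\hat p_1(k)$, since $\hat p_1(k)$ is real. I would split this range into a moderate sub-range $\delta\le W\|k/L\|_2\le M$ and a very large sub-range $W\|k/L\|_2> M$. In the moderate sub-range the integral approximation to $\hat p_1(k)$ by $\hat f(2\pi Wk/L)$ is still valid, and since $\hat f$ is the characteristic function of a non-degenerate absolutely continuous density, $|\hat f(s)|<1$ for all $s\neq 0$; continuity plus compactness then give a uniform bound $\le\rho_1<1$. In the very large sub-range, the continuous approximation is no longer reliable, and I would instead use the hypothesis that $f$ is continuous at $x_0$ with $f(x_0)>0$: this yields a ball $B\subset\Lambda_{L,d}$ of $\asymp W^d$ lattice sites on which $f(x/W)\ge c>0$, and one shows that the phases $2\pi\langle k,x\rangle/L$ are sufficiently equidistributed over $B$ to force $\sum_{x\in B}\bigl(1\pm\cos(2\pi\langle k,x\rangle/L)\bigr)\gtrsim |B|$. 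The main obstacle is exactly this very-large-$k$ regime: arithmetic resonances between $k$ and the lattice could in principle concentrate the cosine near $\pm 1$, and circumventing this requires a judicious choice of a subset of $B$ (for instance, consecutive lattice points along a coordinate direction in which the component $k_j$ is largest) to produce the needed oscillation uniformly in $W$ and $L$.
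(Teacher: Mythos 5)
Your plan for both parts follows essentially the same route as the paper. For \eqref{plorder}, the paper also splits the sum in $x$ at the scale $L/\|k\|_2$ (they use three zones: $\|x\|_2\le lW$, $lW\le\|x\|_2\le L/\|k\|_2$, $\|x\|_2\ge L/\|k\|_2$) and reads off $O(W^2\|k/L\|_2^2)$ from the inner zone and $O(W^\alpha\|k/L\|_2^\alpha)$ from the outer two zones, exactly your decomposition after the rescaling $t=2\pi Wk/L$. For \eqref{plerr}, the paper likewise isolates a box of $\asymp W^d$ lattice sites around $x_0W$ where $f(\cdot/W)\ge f(x_0)/2$, and then the exponential sum over that box factorises into one-dimensional Dirichlet kernels $h(k_j/L)$; since $\|k\|_2>\delta L/W$ forces some $|k_j/L|\gtrsim\delta/W$, that single Dirichlet-kernel bound gives $\rho<1$ uniformly. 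This is precisely the ``consecutive lattice points in the coordinate direction of the dominant $k_j$'' fix you propose, and it handles the moderate and very-large $k$ sub-ranges simultaneously, so your two-case split is unnecessary.

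One place your plan is thinner than the paper's, and potentially a genuine gap: you route \eqref{plorder} through the continuum characteristic function $\hat f$ and defer the sum-vs-integral comparison to ``trapezoidal-type bounds.'' But $f$ is only assumed to satisfy the two-sided bound $f\asymp(\|x\|_2\vee1)^{-d-\alpha}$ and to be continuous at a single point $x_0$; there is no global smoothness. Trapezoidal or midpoint error estimates need regularity of the integrand, and in the small-$t$ window $\|t\|_2\lesssim W^{-3/2}$ a generic $O(1/W)$ Riemann-sum error would overwhelm the $O(\|t\|_2^2)$ target. The paper side-steps this entirely by estimating the discrete sums $S_1,S_2,S_3$ directly from the pointwise bound on $f$, never comparing to an integral; if you keep the $\hat f$ detour you must exploit the $(1-\cos)$ weight (which vanishes at $k=0$) to show the discretization error carries an extra factor of $\|t\|_2^\alpha$ or $\|t\|_2^2$, not just $1/W$. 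Separately, note that both your sketch and the paper's argument only yield $1-\hat p_1(k)\asymp(W\|k/L\|_2)^\alpha$ from the two-sided assumption on $f$; pinning down the sharp constant $C_\alpha$ would need an asymptotic equivalence $f(x)\sim c\|x\|_2^{-d-\alpha}$ rather than $\asymp$, a point you should flag rather than assert.
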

	\begin{proof}
	We start by establishing  equation \eqref{plorder} for $\|k\|_2 \leq \delta \frac{L}{W}$.
	Based on the decay properties of  function $f$  in  \eqref{pldef}, there exists a constant $l<\infty$ such that 
		\begin{equation}\label{pld}
			f(x)\asymp \big(\|x\|_2\big)^{-d-\alpha},\quad  \|x\|_2 \geq l.
		\end{equation}
To analyse $1-\hat{p}_1(k)$, we  partition it according to the range of $x$:
		\begin{equation}
			\begin{aligned}	1-\hat{p}_1(k)&=\frac{1}{\Omega_W}\sum_{x\in \Lambda_{L,d}} f\Big(\frac{x}{W}\Big)\big(1-e^{2\pi i\sum _{j=1}^d\frac{k_jxj}{L}}\big)\\
				&=\frac{1}{\Omega_W}\sum_{x\in \Lambda_{L,d}} f\Big(\frac{x}{W}\Big)\big(1-\cos\big(2\pi\sum _{j=1}^d\frac{k_jxj}{L}\big)\big)\\
				&=:%\big(1+o(1)\big)
				S_1+S_2+S_3,	
			\end{aligned}
		\end{equation}
		where 
		\begin{equation}
			S_1=\frac{1}{\Omega_W}\sum_{\|x\|_2\leq lW}f\Big(\frac{x}{W}\Big)\big(1-\cos\big(2\pi\sum _{j=1}^d\frac{k_jxj}{L}\big)\big),
		\end{equation}
		\begin{equation}
			S_2=\frac{1}{\Omega_W}\sum_{lW\leq\|x\|_2\leq \frac{L}{\|k\|_2}}f\Big(\frac{x}{W}\Big)\big(1-\cos\big(2\pi\sum _{j=1}^d\frac{k_jxj}{L}\big)\big),
		\end{equation}
	and 
		\begin{equation}
			S_3=\frac{1}{\Omega_W}\sum_{\|x\|_2\geq\frac{L}{\|k\|_2}}f\Big(\frac{x}{W}\big)\Big(1-\cos\big(2\pi\sum _{j=1}^d\frac{k_jxj}{L}\big)\big).
		\end{equation}
		%Depending on the range of $x$ and the properties of $f$, we obtain
		
		Using  the approximation $1-cos(kx)\asymp k^2x^2$ if $|x|\leq \frac{\pi}{2|k|}$, we can derive the following estimates %from  \ref{pld}
		\begin{equation}
			S_1\leq O(\|k/L\|_2^2)\frac{1}{\Omega_W}\sum_{\|x\|_2\leq lW}f\Big(\frac{x}{W}\Big)\|x\|_2^2=O\big(W^2\|k/L\|_2^2\big),
		\end{equation}
		\begin{equation}
			S_2\asymp O\big(W^{\alpha}\|k/L\|_2^2\big)\sum_{lW\leq\|x\|_2\leq \frac{L}{\|k\|_2}}\|x\|_2^{-d-\alpha+2}=O\big(W^{\alpha}\|k/L\|_2^{\alpha}\big).
		\end{equation}
	Similarly, using   the fact that $1-cos(kx)\leq 2$ we have 
		\begin{equation}
			S_3\leq O\big(W^{\alpha}\big)\sum_{\|x\|_2\geq\frac{L}{\|k\|_2}}\|x\|_2^{-d-\alpha}=O\big(W^{\alpha}\|k/L\|_2^{\alpha}\big).
		\end{equation}
	It is easy to see that $S_2$ dominates the other terms from above  estimates, thereby completing the proof of equation \eqref{plorder}.\\
		
		Next, we will prove \eqref{plerr} under the condition $\|k\|_2 > \frac{L}{W}\delta$.
		Since $f$ is continuous   at $x_0$ and $f(x_0)>0$ for  some fixed  $x_0\in [0,\frac{L}{2W})^d$, we can choose a positive constant $\epsilon_0 $ such that 
		\begin{equation}
			f(\frac{x}{W})>\frac{1}{2}f(x_0), \quad \mathrm{if}\ \lVert x-x_0 W\rVert_{\infty}\leq \epsilon_0 W,
		\end{equation}
This allows us to partition $\hat{p}_1(k)$, resulting in the following estimate:
		\begin{align}
			\left|\hat{p}_1(k)\right|&\leq
			\frac{1}{\Omega_W}\left(\sum_{0\neq x, \lVert x-x_0 W\rVert_{\infty}\leq \epsilon_0 W}\Big(f\Big(\frac{x}{W}\Big)-\frac{f(x_0)}{2}\Big)+\sum_{0\neq x, \lVert x-x_0 W\rVert_{\infty}>\epsilon_0 W}f\left(\frac{x}{W}\right)\right)\nonumber \\
			&+\frac{1}{\Omega_W}\sum_{ 0\neq x, \lVert x-x_0 W\rVert_{\infty}\leq \epsilon_0 W}\frac{f(x_0)}{2}e^{2\pi i \sum_{j=1}^d\frac{k_jx_j}{L}}.
		\end{align}
		Without loss of generality, we just consider $x_0 =0$ because of  the periodic structure.      In this case, we have 
		\begin{align}
			\left|\hat{p}_1(k)\right|&\leq 1- 
			\frac{1}{\Omega_W}\sum_{0\neq x, \lVert x\rVert_{\infty}\leq \epsilon_0 W}\frac{f(0)}{2} +\frac{1}{\Omega_W}\sum_{ 0\neq x, \lVert x\rVert_{\infty}\leq \epsilon_0 W}\frac{f(0)}{2}e^{2\pi i \sum_{j=1}^d\frac{k_jx_j}{L}}.
		\end{align}

		%assume $f(0)>0$, since $f(x)$ is continuos at zero, we have $f(x)>\frac{1}{2}f(0)$ in $\lVert x\rVert_{\infty}\leq W\epsilon_0$. If $f(0)\leq 0$, we can find a $x_0$ such that $f(x_0)>0$ and do similar argument in $\lVert x-x_0\rVert_{\infty}\leq \epsilon_0W$.\\
		
		Note that for    $\|k\|_2> \frac{L}{W}\delta$,  there is some $j$ such that \begin{equation}
			\Big|\frac{k_j}{L}\Big|>\Big|\frac{\delta}{\sqrt{d}W}\Big|.
		\end{equation} A  direct calculation shows that   
		\begin{equation}
			\sum_{0<\lVert x\rVert_{\infty}\leq  \epsilon_0  W}e^{2\pi i \sum_{j=1}^d\frac{k_jx_j}{L}}
			=\prod_{j=1}^d h\big(\frac{k_j}{L}\big)(2\lfloor \epsilon_0 W\rfloor+1)^d -1 \leq   \tilde{\rho}(W_0) (2\lfloor \epsilon_0 W\rfloor+1)^d -1, 
		\end{equation}
		where 
		%\begin{equation}\sum_{x=-W\epsilon_0}^{W\epsilon_0}e^{2\pi i \frac{kx}{L}}
		%	=\frac{\sin\big((2W\epsilon_0+1)\pi k/L\big)}{\sin(\pi k/L)}\end{equation}
	\begin{equation}
		h(y)=\frac{\sin\big((2\lfloor \epsilon_0 W\rfloor+1)\pi y\big)}{(2\lfloor \epsilon_0 W\rfloor+1)\sin (\pi y)}, \quad  y\in [-\frac{1}{2}, \frac{1}{2}],
	\end{equation}
	and   the following claim  has been used:
	\\
	Given $\epsilon\in (0,1)$, there is a sufficiently large $W_0$  such that when  $W\geq W_0$
	\begin{equation} \label{upper1}
		|h(y)|\leq \tilde{\rho}(W_0)<1.
	\end{equation}
	Furthermore, for $W$ sufficiently large we see from $\Omega_W\sim W^d$ that 
	\begin{align}
		\left|\hat{p}_1(k)\right|&\leq 1-  \frac{f(0)}{2} (1-\tilde{\rho}(W_0)) 
		\frac{(2\lfloor \epsilon_0 W\rfloor+1)^d}{\Omega_W} \leq \rho(W_0,\delta) <1 
	\end{align}
	for some 
	$\rho(W_0,\delta)$.

	The verification of \eqref{upper1} is elementary.  For this, put $T=2\lfloor \epsilon_0 W\rfloor+1$, then  for  $|y|\in [\frac{1}{2T},\frac{1}{2}]$ 
	\begin{equation}
		|h(y)|\leq  \frac{1}{T|\sin(\pi y)|} \leq  \frac{1}{T|\sin(\frac{\pi}{2T} )|}  \leq \frac{1}{T_0|\sin(\frac{\pi}{2T_0} )|}<1.
	\end{equation}
	While  for  $  |y|   \in [\frac{\epsilon}{T},\frac{1}{2T}]$ 
	\begin{equation}
		|h(y)|\leq  \frac{\sin(Ty)}{T(y-\frac{1}{6}y^3)  } \leq  
		\frac{\sin(\epsilon)}{\epsilon}   \frac{1}{|1- \frac{\pi}{24T^2} |}
		\leq   \frac{\sin(\epsilon)}{\epsilon}   \frac{1}{|1- \frac{\pi}{24T_0^2} |}<1.
	\end{equation}
	%\frac{\sin(\epsilon)}{\epsilon}   \frac{1}
	Choose   $T_0$  large   and we thus complete the proof.   
\end{proof}
%This lemma is similar to Proposition 1.1 \cite{cs08}.
\subsection{Proof of Theorem \ref{pllt}}
\begin{proof}[ Proof of Theorem \ref{pllt}]
	We first focus on  the supercritical regime, and then complete the  other two cases in a unified way.
	
	To begin, it is important to observe that $\hat{p}_1(0) = 1$. Following this, we will proceed to estimate the part in which $k \neq 0$. By Lemma \ref{plf}, we have
	\begin{equation}  |\hat{p}_1(k)|^n \leq   e^{-\frac{cnW^{\alpha}}{L^{\alpha}}\|k\|_{2}^{\alpha}},\quad \forall\, \|k\|_2 \leq \delta \frac{L}{W}.
	\end{equation}
	By dividing the summation over $k$ into two parts as in Lemma \ref{plf}, we  obtain 
	\begin{equation} \label{suptwo}
		\begin{aligned}
			\left|\sum_{\|k\|_2>0}\left(\hat{p}_1(k)\right)^ne^{-2\pi i \sum_{j=1}^d\frac{k_jx_j}{L}}\right|
			&\leq  N\rho^n+
			\sum_{0<\lVert k\rVert_2\leq 
				\delta \frac{L}{W}}  e^{-cn \frac{W^{\alpha}}{L^{\alpha} }
				\|k\|_{2}^{\alpha}}.
			%	&\leq  O\big(\big(\frac{L_0}{\sqrt{n}W}\big)^{d}\big)+e^{n\log \rho+O(\log L_0)}, 
		\end{aligned}
	\end{equation}
	%	It is easy to see that  when $\lVert k\rVert_2>0$ there is some $j$ such that $|k_j|\geq 1$, and
	The sum on the right-hand of \eqref{suptwo} can be approximated by the integral %Consider the integral approximation, we have 
	\begin{equation} 
		\int_{0<\lVert k\rVert_2\leq 
			\delta \frac{L}{W}}  e^{-cn \frac{W^{\alpha}}{L^{\alpha} }
			\|k\|_{2}^{\alpha}}dk
		\leq  
		O\Big(\Big(\frac{L^{\alpha}}{nW^{\alpha}}\Big)^{1+(d-1)/ \alpha}\Big).
	\end{equation}
	When both   $n\gg L^{\alpha}/W^{\alpha}$ and $n\gg \log L$,
	\begin{equation} 
		\begin{aligned}
			\left|\sum_{\|k\|_2>0}\left(\hat{p}_1(k)\right)^ne^{-2\pi i \sum_{j=1}^d\frac{k_jx_j}{L}}\right|
			& \leq 	O\Big(\Big(\frac{L^{\alpha}}{nW^{\alpha}}\Big)^{1+(d-1)/ \alpha}\Big)
			+e^{n\log \rho+O(\log L)}=o(1),
		\end{aligned}
	\end{equation}
	from which  the proof in the  supercritical regime is  completed.
	
Next, we will establish the proofs for both the critical and subcritical regimes.
	By  Lemma \ref{plf}, we divided the sum into two parts,
	\begin{equation}
		p_n(x)=\frac{1}{N}\Big(\sum_{\|k\|_2\leq \delta\frac{L}{W}}+\sum_{\|k\|_2> \delta\frac{L}{W}}\Big)(\hat{p}_1(k))^n  e^{-2\pi i \sum_{j=1}^{d}\frac{k_jx_j}{L }}=:I_1+I_2.
	\end{equation}
	In the following  discussion, we will elucidate that $I_1 $ is the dominant term of $ p_n(x)$, whereas $ I_2 $ is relatively negligible in comparison to $I_1$. For $I_1$, noting the definition of $\alpha$-stable distribution and applying the Poisson summation formula, we have
		\begin{align}
			I_1
			&=\frac{1}{N}\sum_{\|k\|_2\leq \delta\frac{L}{W}} e^{-C_\alpha nW^{\alpha}\|k/L\|_2^{\alpha}}e^{-2\pi i \sum_{j=1}^{d}\frac{k_jx_j}{L}} \notag\\
			&=\frac{1}{N}\sum_{\|k\|_2\leq \delta\frac{L}{W}} \hat{f}_{\alpha}(n^{\frac{1}{\alpha}}W k/L)e^{-2\pi i \sum_{j=1}^{d}\frac{k_jx_j}{L }}  \notag\\
			&=\frac{1}{N}\sum_{k\in \mathbb{Z}^d}\hat{f}_{\alpha}(n^{\frac{1}{\alpha}}W k/L)e^{-2\pi i \sum_{j=1}^{d}\frac{k_jx_j}{L }}-O\Big(\frac{1}{N}\sum_{\|k\|_2\geq \delta\frac{L}{W}}e^{-C_{\alpha}nW^{\alpha}\|k/L\|_2^{\alpha}}\Big) \notag\\
			&
			=\frac{1}{n^{\frac{d}{\alpha}}W^d}
			\sum_{k \in \mathbb{Z}^d}f_{\alpha}\Big(\frac{x+k L}{n^{\frac{1}{\alpha}}W}\Big)-O\Big(\frac{1}{N}\sum_{\|k\|_2\geq \delta\frac{L}{W}}e^{-C_{\alpha}nW^{\alpha}\|k/L\|_2^{\alpha}}\Big).\label{I1LT}
		\end{align}	
	For the big O term in the last equation, we can show that 
	\begin{equation}\label{I1O}
	\frac{1}{N}	\sum_{\|k\|_2\geq \delta\frac{L}{W}}e^{-C_{\alpha}nW^{\alpha}\|k/L\|_2^{\alpha}}=\frac{1}{n^{\frac{d}{\alpha}}W^d}
		\sum_{k \in \mathbb{Z}^d}f_{\alpha}\big(\frac{x+k L}{n^{\frac{1}{\alpha}}W}\big)o(1),
	\end{equation}
	whenever $nW^{\alpha} \ll L^{\alpha}$ or $nW^{\alpha} \sim L^{\alpha}$.
	% by keeping the first few terms and bounding the remaining sum  by an integral  because of the  monotonically   function.  \\
By combining \eqref{I1LT} and \eqref{I1O}, we have \begin{equation}\label{I1leading}
		I_1=\frac{1}{n^{\frac{d}{\alpha}}W^d}
		\sum_{k \in \mathbb{Z}^d}f_{\alpha}\big(\frac{x+k L}{n^{\frac{1}{\alpha}}W}\big)\big(1+o(1)\big).
		\end{equation}
	
	When $nW^{\alpha}\ll L^{\alpha}$, 
	we see that  the term of $k=0$ in \eqref{I1leading} is dominant
	\begin{equation}\label{subre}
		\frac{1}{n^{\frac{d}{\alpha}}W^d}
		\sum_{k \in \mathbb{Z}^d}f_{\alpha}\Big(\frac{x+k L}{n^{\frac{1}{\alpha}}W}\Big)=\frac{1}{n^{\frac{d}{\alpha}}W^d}f_{\alpha}((nW^{\alpha})^{-1/ \alpha}x)\big(1+o(1)\big),
	\end{equation} while%and by  Definition \ref{def-tf}
	\begin{equation}
		\frac{1}{n^{\frac{d}{\alpha}}W^d}f_{\alpha}((nW^{\alpha})^{-1/ \alpha}x)=f_{\alpha}(x,nW^{\alpha}).
	\end{equation}
	When $nW^{\alpha}\sim L^{\alpha}$, we see that all terms of the summation in \eqref{I1leading} should be retained,
	\begin{equation}\label{crires}
		\frac{1}{n^{\frac{d}{\alpha}}W^d}
		\sum_{k \in \mathbb{Z}^d}f_{\alpha}\big(\frac{x+k L}{n^{\frac{1}{\alpha}}W}\big)=\frac{1}{N}\sum_{k \in \mathbb{Z}^d}f_{\alpha}(\frac{x}{L}+k,n\Big(\frac{W}{L}\Big)^{\alpha})=\frac{1}{N}\theta_{\alpha}\left(\frac{x}{L},n\Big(\frac{W}{L}\Big)^{\alpha}\right).
	\end{equation}

	For $I_2$, by Lemma \ref{plf}, we have a rough estimate,
	\begin{equation}
		|I_2|\leq \frac{1}{N}\sum_{\|k\|_2> \delta\frac{L}{W}}\rho^n .
	\end{equation}
	Further, for $|I_2|/|I_1|$, we have 
	\begin{equation}
		n^{\frac{d}{\alpha}}W^d O\Big(\|x\|_2^{d+\alpha}/\big(n^{\frac{d+\alpha}{\alpha}}W^{d+\alpha}\big)\Big)|I_2|\leq C e^{n\log \rho +\log(nW^{\alpha})+O\big(\log \|x\|_2\big)}=o(1),
	\end{equation}
	since  $n \gg \log W$  and $\log\rho<0$.  
	
	This shows that $|I_2|$  is negligible compared to the leading estimate \eqref{subre} and \eqref{crires}, so we complete the proof in the subcritical and critical  regimes.
\end{proof}
\subsection{Proof of Proposition    \ref{p-uub}}
\begin{proof}[Proof of Proposition \ref{p-uub}]
According to Theorem  \ref{pllt} and the heat kernel bound of $\alpha$-stable transition density $f_{\alpha}(x,\tau)$, when $n\gg \log L$,   there exists a constant $C > 0 $ such that 
\begin{equation}\label{uub}
p_n(x)\leq C \left(\frac{1}{n^{\frac{d}{\alpha}}W^d}\wedge \frac{nW^{\alpha}}{(\|x\|_2 \vee W) ^{d+\alpha}}+\frac{1}{N} \right).
\end{equation}
Subsequently, it is necessary to elucidate the existence of an upper bound estimate for \( n \) when it is less than $ c \log L $ for some constant $c > 0 $.
%Next, we only need to explain that there is an upper bound estimate when $n$ is smaller than $c\log L$ for some constant $c>0$. %

On one hand, using  the inequality
\begin{equation}  1-(1-\hat{p}_1(k))\leq e^{-(1-\hat{p}_1(k))} \quad \text{for}\quad \|k\|_2 \leq \delta \frac{L}{W},
\end{equation}
and applying Lemma \ref{plf}, we can divide $p_n$ into two parts
\begin{align}\label{abpn}
p_n(x)&=\frac{1}{N}\sum_{k\in \Lambda_{L,d}}(\hat{p}_1(k))^ne^{-2\pi i \sum_{j=1}^{d}\frac{k_jx_j}{L }}\notag\\
&\leq \frac{1}{N}\Big(\sum_{\|k\|_2\leq \delta \frac{L}{W}}e^{-n(1-\hat{p}_1(k))}+\sum_{\|k\|_2> \delta\frac{L}{W}}\rho^{n-2}(\hat{p}_1(k))^2\Big).
\end{align}
For the first term in the above expression, since $f_{\alpha}$ is a probability density function,  the application of the Poisson summation formula yields \begin{align}\label{firboud}
\frac{1}{N}\sum_{\|k\|_2<\eta \frac{L}{W}}e^{-n(1-\hat{p}_1(k))}&\leq 	\frac{1}{N}\sum_{k \in \mathbb{Z}^d}\hat{f}_{\alpha}\Big(\frac{n^{\frac{1}{\alpha}}Wk}{L}\Big)\notag\\
&=\frac{1}{n^{\frac{d}{\alpha}}W^d}\sum_{x \in \mathbb{Z}^d}
f_{\alpha}\Big(\frac{xL}{n^{\frac{1}{\alpha}}W}\Big)\notag\\
&\leq \frac{O(1)}{n^{\frac{d}{\alpha}}W^d}.
\end{align}
For the second term in \eqref{abpn}, we observe that 
\begin{equation}
\frac{1}{N}\sum_{k \in \Lambda_{L,d}}(\hat{p}_1(k))^2=p_2(0)\leq \|p_1\|_{\infty},
\end{equation} 
which leads to 
\begin{equation}\label{secboud}
\frac{1}{N}\sum_{\|k\|_2> \delta\frac{L}{W}}\rho^{n-2}(\hat{p}_1(k))^2 \leq \rho^{n-2}\|p_1\|_{\infty}\leq O\Big(\frac{1}{W^d}\Big)n^{-\frac{d}{\alpha}}.
\end{equation}
Taking into account the results from the first part \eqref{firboud} and the second part \eqref{secboud},   we can conclude that there exists a  positive constant $C_1$ such that
\begin{equation}\label{abpnb}
p_n(x)\leq \frac{C_1}{n^{\frac{d}{\alpha}}W^d}.
\end{equation}

On the other hand, we claim that the $n$-steps transition probability $p_n(x)$ satisfies 
\begin{equation}\label{cla}
p_n(x) \leq C_2\frac{ nW^{\alpha}}{(\|x\|_2 \vee W)^{d+\alpha}},
\end{equation}
where $C_2$ is a positive constant.
If this holds true, by combining \eqref{abpnb} and \eqref{cla}, we derive the  upper bound estimate 
\begin{equation}\label{plnsmab}
p_n(x)\leq C_3 \left(\frac{1}{n^{\frac{d}{\alpha}}W^d}\wedge \frac{nW^{\alpha}}{(\|x\|_2 \vee W) ^{d+\alpha}} \right).
\end{equation}
Having established \eqref{uub} and \eqref{plnsmab}, we can therefore complete the proof of Proposition \ref{p-uub}. 

The remaining task is to prove the above claim \eqref{cla}.
Firstly, within the setting of spread-out power law models,  it's easy to see from the definition of $p_1$ in \eqref{defp1} that% $p_1(x)=\frac{(\|x\| \vee W)^{-d-\alpha}}{\sum_{ y \in \Lambda_{L,d}}(\|y\| \vee W)^{-d-\alpha}}$, so 
\begin{equation}\label{1sbound}
p_1(x) \leq O(W^{\alpha})(\|x\|_2\vee W)^{-d-\alpha}.
\end{equation} 
In the following analysis, we define the event 
\begin{align}
&	A=\{\text{random walks in which all steps are shorter than $C_4(\|x\|_2 \vee W)$ }\},
%&	A^{c}=\{\text{random walks that have at least one step longer than $C_4(\|x\|_2 \vee W)$}\},
\end{align}
where $C_4 > 0$ is a constant.
This framework allows us to bound $ p_n(x)$ by considering the contributions from both events $ A $ and the complement   $A^{c}$.

By applying the local Central Limit Theorem on $\Lambda_L^d$ to the event $A$, and  employing the bound for $p_1$ provided in \eqref{1sbound} to $A^{c}$, we derive the following bound for $p_n$
\begin{align}
	p_n(x)&=\mathbb{P}(A)+\mathbb{P}(A^{c})  \notag
	\leq (\tilde{\sigma} n)^{-\frac{d}{2}}e^{-\frac{\|x\|^2}{\tilde{\sigma}n}} + n p_1(y)\\
   &\leq \frac{O(\tilde{\sigma}n)}{(\|x\|_2 \vee W)^{d+2}}+\frac{O(nW^{\alpha})}{(\|x\|_2 \vee W)^{d+\alpha}}, \label{lcltb}
\end{align}
where \(\|y\|_2 \geq C_4(\|x\|_2 \vee W)\) and \(\tilde{\sigma}\) denotes the variance of the truncated one-step transition probability  defined as
 \begin{equation}
 	\tilde{p}_1(z) := p_1(z) \mathbb{I}\{\|z\|_2 < C_4(\|x\|_2 \vee W)\}.
 \end{equation} This variance can be expressed as  
\begin{equation}\label{varb}
	\tilde{\sigma} = \sum_{z \in \Lambda_L^d} \|z\|_2^2 \tilde{p}_1(z) \leq O(W^{\alpha})(\|x\| \vee W)^{2-\alpha}.
\end{equation}
By substituting the upper bound  of $\tilde{\sigma}$ into \eqref{lcltb}, we obtain 
\begin{equation}
	p_n(x) \leq \frac{O(nW^{\alpha})}{(\|x\|_2 \vee W)^{d+\alpha}}.
\end{equation}
Thus, we complete the proof of the claim  \eqref{cla}.
%With \eqref{uub}and \eqref{plnsmab}, thus we complete the proof of Proposition \ref{p-uub}. 
\end{proof}

\section{Proof of Theorem \ref{LLT}}  \label{sect3}

Similarly, for Theorem \ref{LLT},  we need to use  the (inverse) discrete Fourier transform  to obtain    \eqref{IFT}. 
A key estimate  of $\hat{p_1}$  can be  summarized as follows.
\begin{lemma}\label{lem2parts}
For  $\delta>0$ sufficiently small, we have  
\begin{equation}
\left|\hat{p}_1(k)\right|\leq 1- \pi^2  \sigma_{L}^2,  \quad \forall\, \|k\|_2 \leq \delta \frac{L}{W}
\end{equation}
and 
there exists $\rho<1$
such that \begin{equation}
\left|\hat{p}_1(k)\right|\leq \rho, \quad \forall\,\|k\|_2>\delta \frac{L}{W}.
\end{equation}
\end{lemma}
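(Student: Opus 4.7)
The argument splits naturally along the two regimes and, in the low--frequency regime, replaces the power-law expansion used in Lemma~\ref{plf} by a second-order Taylor expansion controlled by the covariance $\Gamma_{L}$. Throughout I will use the fact that $f$ is symmetric, so $p_{1}$ is symmetric and $\hat{p}_{1}(k)$ is real-valued, which lets me write
\begin{equation*}
1-\hat{p}_{1}(k)=\frac{1}{\Omega_{W}}\sum_{x\in\Lambda_{L,d}} f\!\left(\tfrac{x}{W}\right)\bigl(1-\cos(2\pi\langle k,x\rangle/L)\bigr).
\end{equation*}

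\textbf{Low-frequency regime $\|k\|_{2}\le\delta L/W$.} The key is the uniform Taylor bound $\bigl|\cos\theta-1+\tfrac{\theta^{2}}{2}\bigr|\le\tfrac{|\theta|^{3}}{6}$, valid for every real $\theta$. Applying it term by term, the leading term gives
\begin{equation*}
\frac{1}{\Omega_{W}}\sum_{x} f\!\left(\tfrac{x}{W}\right)\frac{(2\pi\langle k,x\rangle/L)^{2}}{2}
=\frac{2\pi^{2}W^{2}}{L^{2}}\langle k\Gamma_{L},k\rangle=2\pi^{2}\sigma_{L}^{2},
\end{equation*}
while the remainder is bounded by
\begin{equation*}
\frac{(2\pi)^{3}\|k\|_{2}^{3}}{6L^{3}}\cdot\frac{1}{\Omega_{W}}\sum_{x} f\!\left(\tfrac{x}{W}\right)\|x\|_{2}^{3}=O\bigl((W\|k\|_{2}/L)^{3}\bigr),
\end{equation*}
where the last estimate uses the finite third moment hypothesis $\int\|x\|_{2}^{3}f(x)\,dx<\infty$ together with the Riemann-sum approximation $\Omega_{W}^{-1}\sum f(x/W)\|x\|_{2}^{3}\asymp W^{3}$. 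Since $\Gamma_{L}\to\Gamma\succ 0$, its smallest eigenvalue is bounded below, so $\sigma_{L}^{2}\asymp (W\|k\|_{2}/L)^{2}$ and the remainder is at most a constant times $\delta\cdot\sigma_{L}^{2}$. Choosing $\delta$ small enough, I obtain $1-\hat{p}_{1}(k)\ge \pi^{2}\sigma_{L}^{2}$. The same bound also forces $\sigma_{L}^{2}\le C\delta^{2}<1/(2\pi^{2})$, so $\hat{p}_{1}(k)\in(0,1)$ and $|\hat{p}_{1}(k)|=\hat{p}_{1}(k)\le 1-\pi^{2}\sigma_{L}^{2}$, as required.

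\textbf{High-frequency regime $\|k\|_{2}>\delta L/W$.} Here the argument from Lemma~\ref{plf} transfers verbatim because it relied only on the continuity and positivity of $f$ at some point $x_{0}\in[0,L/(2W))^{d}$, a hypothesis retained in Theorem~\ref{LLT}. I will isolate the block where $\|x-x_{0}W\|_{\infty}\le\epsilon_{0}W$ on which $f(x/W)\ge f(x_{0})/2$, use periodicity to reduce to $x_{0}=0$, and evaluate the resulting geometric sum via the Dirichlet-kernel estimate $|h(y)|\le\tilde{\rho}(W_{0})<1$ for $|y|\ge \delta/(\sqrt{d}\,W)$, giving $|\hat{p}_{1}(k)|\le\rho<1$ for some $\rho=\rho(\delta,W_{0})$ once $W$ is large.

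\textbf{Expected obstacle.} The delicate step is the low-frequency estimate: the third-moment assumption is exactly at the boundary of what is needed, so I must be careful that the Riemann-sum approximation $\Omega_{W}^{-1}\sum f(x/W)\|x\|_{2}^{3}=O(W^{3})$ holds uniformly in $L,W$ and not just in the infinite-lattice limit. This requires noting that extending the sum from $\Lambda_{L,d}$ to $W\mathbb{Z}^{d}$ only adds tails dominated by $\int\|x\|_{2}^{3}f(x)\,dx<\infty$, and that $\Omega_{W}\asymp W^{d}$ since $f$ is a density. With this in hand, the rest is mechanical.
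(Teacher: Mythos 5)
Your low-frequency argument has a genuine gap.  You bound the Taylor remainder using only the cubic estimate $|\cos\theta-1+\theta^2/2|\le|\theta|^3/6$ and then invoke the hypothesis $\int\|x\|_2^3 f(x)\,dx<\infty$ to control $\Omega_W^{-1}\sum f(x/W)\|x\|_2^3$.  But Theorem~\ref{LLT} assumes that third moment only in Cases (I) and (II), whereas Lemma~\ref{lem2parts} must also hold in the supercritical Case (III), where it is used to derive $|\hat{p}_1(k)|^n\le e^{-cnW^2\|k\|_2^2/L^2}$ from $|\hat p_1(k)|\le 1-\pi^2\sigma_L^2$ and $\sigma_L^2\ge c'\,W^2\|k\|_2^2/L^2$.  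If $f$ has a finite covariance but an infinite third moment (e.g.\ $f(x)\asymp\|x\|_2^{-d-5/2}$ at infinity), your bound $O((W\|k\|_2/L)^3)$ for the remainder simply does not hold, and your $|\hat p_1(k)|\le 1-\pi^2\sigma_L^2$ breaks down.

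The paper avoids this by using the two-sided Taylor remainder
\begin{equation*}
\Bigl|e^{iy}-\sum_{m=0}^2\frac{(iy)^m}{m!}\Bigr|\le\min\Bigl\{\tfrac16|y|^{3},\,|y|^{2}\Bigr\},
\end{equation*}
which gives
\begin{equation*}
\Bigl|\hat p_1(k)-\bigl(1-2\pi^2\sigma_L^2\bigr)\Bigr|
\le\frac{(2\pi)^2}{\Omega_W}\sum_{x}f\!\left(\tfrac xW\right)\min\Bigl\{\tfrac\pi3\bigl|\langle k,x\rangle/L\bigr|^{3},\,\bigl|\langle k,x\rangle/L\bigr|^{2}\Bigr\}.
\end{equation*}
One then splits according to $\|x\|_2\le MW$ (where the cubic branch supplies a factor $\le\pi\delta M/3$ times $|\langle k,x\rangle/L|^2$) and $\|x\|_2>MW$ (where the quadratic branch is used and the second-moment tail is small for $M$ large).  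Choosing first $M$ large, then $\delta$ small, yields the stated $1-\pi^2\sigma_L^2$ bound under the covariance hypothesis alone.  Your idea is the same up to this point, but you should replace the pure cubic remainder by the minimum of the cubic and quadratic remainders, so that the estimate survives without a third moment.  Your treatment of the high-frequency regime, reducing verbatim to the argument of Lemma~\ref{plf}, is correct and matches the paper.
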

\begin{proof}
Since $f$ is   symmetric,  we have  $\sum_{x} x_j p_1(x)=0$  for all $j$. With the notation \eqref{LVar} in mind,  it is easy to see from 
the inequality 
\begin{equation}
\left|e^{iy}-\sum_{m=0}^2\frac{(iy)^m}{m!}\right|\leq \min\left\{\frac{1}{6}|y|^{3},|y|^2\right\}, \quad  \forall y\in \mathbb{R}
\end{equation}
we have
\begin{align} \label{pTaylor2}
&\left| \hat{p}_1(k)-\left(1-2\pi^2  \sigma_{L}^2 \right)\right| \nonumber\\
&\leq \sum_{0\neq x \in \Lambda_{L,d}}\frac{(2\pi)^2}{\Omega_W}f\left(\frac{x}{W}\right)
\min\left\{\frac{\pi}{3}\bigg|\sum_{j=1}^d\frac{k_jx_j}{L}\bigg|^{3},\bigg|\sum_{j=1}^d\frac{k_jx_j}{L}\bigg|^2\right\}.
\end{align}
Thus, we can choose  a sufficiently  small   constant $\delta>0$   such that  the right-hand side of \eqref{pTaylor2}  is less than  $\pi^2  \sigma_{L}^2$ whenever $\|k\|_2\leq \frac{L}{W}\delta$.  This shows for  $\|k\|_2\leq \frac{L}{W}\delta$ that 
\begin{equation}
\left|\hat{p}_1(k)\right|\leq 1- \pi^2  \sigma_{L}^2. 
\end{equation}

The second part of Lemma \ref{lem2parts} is similar to that of Lemma \ref{plf}, whose proof is independent of the density function. Thus, we complete the proof of the lemma.
\end{proof}
Near the origin 
of  $k$ we have a more exact  approximation for $\hat{p}_n(k)$.

\begin{lemma}\label{Taylorlocal} Assume that 
$\int_{\mathbb{R}^d} \|x\|_{2}^3  f(x)dx<\infty$, for any given  $\delta_0\in (0,\frac{1}{6})$  we have 
\begin{equation}\hat{p}_n(k)	=e^{-2\pi^2  n\sigma_{L}^2} \Big(1+O\big(n^{-\frac{1}{2}+3\delta_0}\big)\Big)\end{equation}
uniformly for \begin{equation}\|k\|_2 \leq n^{-\frac{1}{2}+\delta_0}\frac{L}{W}.\end{equation}
\end{lemma}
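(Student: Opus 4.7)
\medskip

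\noindent\textbf{Proof plan for Lemma \ref{Taylorlocal}.}

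The plan is to Taylor expand $\hat{p}_1(k)$ to second order around $k=0$, take the $n$-th power via the exponential of a logarithm, and show that every remainder is controlled by the stated $n^{-\frac{1}{2}+3\delta_0}$ error.  The third-moment hypothesis $\int_{\mathbb{R}^d}\|x\|_2^3 f(x)\,dx<\infty$ is precisely what makes the cubic remainder tractable.

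First, since $f$ is symmetric, all first-order terms in the Fourier expansion vanish and the definition \eqref{LVar} of $\sigma_L^2$ gives the identity
\[
\sum_{x\in\Lambda_{L,d}} \Big(\sum_{j=1}^d \tfrac{k_j x_j}{L}\Big)^{\!2}\, p_1(x)\;=\;\sigma_L^2.
\]
Applying the three-term inequality
$|e^{iy}-\sum_{m=0}^2(iy)^m/m!|\le \tfrac{1}{6}|y|^3$ (exactly as in \eqref{pTaylor2}, but keeping the cubic branch) and the third-moment bound on $f$, which together yield
$\sum_x \|x\|_2^3\,p_1(x)=O(W^3)$, one obtains
\[
\hat{p}_1(k)\;=\;1-2\pi^2\sigma_L^2 + R(k),\qquad |R(k)|\le C\,(W\|k\|_2/L)^3.
\]

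Second, I would translate the hypothesis $\|k\|_2\le n^{-\frac{1}{2}+\delta_0}L/W$ into size bounds: $W\|k\|_2/L\le n^{-\frac{1}{2}+\delta_0}$, whence
\[
\sigma_L^2=O\!\big(n^{-1+2\delta_0}\big),\qquad |R(k)|=O\!\big(n^{-\frac{3}{2}+3\delta_0}\big),
\]
so $|1-\hat{p}_1(k)|=o(1)$ uniformly on the range.  This makes $\log\hat{p}_1(k)$ well defined and allows the expansion
\[
\log\hat{p}_1(k)\;=\;-2\pi^2\sigma_L^2+R(k)+O\!\big((\sigma_L^2+|R(k)|)^2\big).
\]

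Third, multiply by $n$ and exponentiate.  The main term is $-2\pi^2 n\sigma_L^2$.  The contribution of $R(k)$ to the exponent is $nR(k)=O(n^{-\frac{1}{2}+3\delta_0})$, which is exactly the claimed error size.  The quadratic correction in the logarithm gives $n(\sigma_L^2)^2=O(n^{-1+4\delta_0})$, which, for any $\delta_0<\tfrac{1}{6}$, is strictly dominated by $n^{-\frac{1}{2}+3\delta_0}$; the mixed and $nR^2$ terms are even smaller.  Using $e^{O(n^{-\frac{1}{2}+3\delta_0})}=1+O(n^{-\frac{1}{2}+3\delta_0})$ completes the bound
\[
\hat{p}_n(k)=e^{-2\pi^2 n\sigma_L^2}\bigl(1+O(n^{-\frac{1}{2}+3\delta_0})\bigr).
\]

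The main obstacle is purely bookkeeping: tracking the two competing scales $\sigma_L^2$ and $R(k)$ through the logarithm and making sure the various quadratic-and-higher remainders in the expansion of $\log(1-u)$ are all absorbed by $n^{-\frac{1}{2}+3\delta_0}$.  The restriction $\delta_0<\tfrac{1}{6}$ in the statement is exactly what is needed for the $n(\sigma_L^2)^2$ term to be negligible compared with the advertised error, so no sharper analysis of the cubic remainder is required.
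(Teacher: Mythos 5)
Your proof is correct and takes essentially the same approach as the paper's: second-order Taylor expansion of $\hat p_1(k)$, identification of $-2\pi^2\sigma_L^2$ from the symmetric quadratic form, a cubic remainder controlled by $\int_{\mathbb{R}^d}\|x\|_2^3 f(x)\,dx<\infty$, and the $n$-th power taken through the logarithm. The paper states the Taylor expansion and says the result follows by taking the $n$-th power; your careful tracking of $\log\hat p_1(k)$ and its quadratic correction is the same argument made explicit.

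One small inaccuracy in your closing remark: you assert that $\delta_0<\tfrac16$ is what makes $n(\sigma_L^2)^2=O(n^{-1+4\delta_0})$ negligible compared with the advertised error $n^{-\frac12+3\delta_0}$. In fact that comparison holds whenever $\delta_0<\tfrac12$, since $-1+4\delta_0<-\tfrac12+3\delta_0$ iff $\delta_0<\tfrac12$. The restriction $\delta_0<\tfrac16$ is actually what guarantees that the dominant contribution $nR(k)=O(n^{-\frac12+3\delta_0})$ is itself $o(1)$, which is what legitimates the final step $e^{O(n^{-\frac12+3\delta_0})}=1+O(n^{-\frac12+3\delta_0})$; without it the exponent would not be bounded and the expansion of the exponential would fail.
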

\begin{proof}
%	Since $k\cdot x/L\rightarrow 0$ for all $x\in \mathbb{Z}_L^d$, so we have $$e^{2\pi i k\cdot x/L}=1+2\pi i k\cdot x/L-4\pi^2 (\frac{k\cdot x}{L})^2+O((\frac{k\cdot x}{L})^3).$$ 
By the   Taylor expansion, 
\begin{align} \label{pTaylor3}		
\hat{p}_1(k)=1-2\pi^2  \sigma_{L}^2+ 
O\Big( \Big|\sum_{j=1}^d\frac{k_j}{L}\Big|
^{3}\Big).
\end{align}
taking the $n$-th power on both sides, the desired result immediately follows.  \end{proof}
%For example, we set $f(x)=1_{\{0<\|x\|_{\infty}\leq 1 \}}$
%	\begin{equation}  \begin{aligned}
%	\frac{\sum_{0<\|x\|_{\infty}\leq W}x_1^2}{\Omega_{W}}&= \frac{1}{6}W(W+1)(2W+1)\frac{(2W+1)^{d-1}}{(2W+1)^{d}-1} \\
%	&=\frac{1}{6}W^2\left(1+O\big(\frac{1}%{W}\big)\right)
%\end{aligned},\end{equation}

%And $\frac{n|k|^2W^2}{L_0^2}$ tends to a constant $C_{p}$, here $p$ denote that $C_p$ is related to the $l^p$ norm.

Now we are ready to complete the proof of Theorem \ref{LLT}.
\begin{proof}[Proof of Theorem \ref{LLT}]

	%	So the lemmas show that $\frac{\hat{c}_n(x)}{(\Omega_{W})^n}$ is Gaussian and imply that $\frac{c_n(x)}{(\Omega_{W})^n}$ is Gaussian. 
	%Now we use the discrete Fourier inverse transform on torus.

	%	By the Poisson summation formula we have another form about (3.12). And there exists phase transition when $\frac{L_0^2}{nW^2}$ tends to 0 , $\infty$ or a constant.
	
	We first focus on  the supercritical regime, and then complete the  other two cases in a unified way.

	Note that the covariance matrix $\Gamma$ of $f$ is positive definite,  it  is easy to see from \eqref{LVar} that there is a positive constant $c$ such that  
	\begin{equation} \label{p1k-001} \sigma_{L}^2\geq   \frac{cW^2}{2\pi^2L^2}\|k\|_{2}^2,
	\end{equation}
	from which 
	\begin{equation} \label{p1k-1} |\hat{p}_1(k)|^n \leq   e^{-\frac{cnW^2}{L^2}\|k\|_{2}^2},\quad \forall\, \|k\|_2 \leq \delta \frac{L}{W}.
	\end{equation}
	By dividing the summation over $k$ into two parts as in Lemma \ref{lem2parts}, we  obtain 
	\begin{equation} \label{supbound1}
		\begin{aligned}
			\left|\sum_{\|k\|_2>0}\left(\hat{p}_1(k)\right)^ne^{-2\pi i \sum_{j=1}^d\frac{k_jx_j}{L}}\right|
			&\leq  N\rho^n+
			\sum_{0<\lVert k\rVert_2\leq 
				\delta \frac{L}{W}}  e^{-cn \frac{W^2}{L^2 }
				\|k\|_{2}^2}.
			%	&\leq  O\big(\big(\frac{L_0}{\sqrt{n}W}\big)^{d}\big)+e^{n\log \rho+O(\log L_0)}, 
		\end{aligned}
	\end{equation}
	For $k\in \mathbb{Z}^d$, it's easy to see that  $\lVert k\rVert_{2}^2\geq \sum_{j=1}^d |k_j|$ and when $\lVert k\rVert_2>0$ there is some $j$ such that $|k_j|\geq 1$, so we  get 
	\begin{equation} \label{supbound2}
		\sum_{0<\lVert k\rVert_2\leq 
			\delta \frac{L}{W}}  e^{-cn \frac{W^2}{L^2 }
			\|k\|_{2}^2}
		\leq   2^d \Big(1-e^{-cn \frac{W^2}{L^2}}\Big)^{-d} 
		e^{-cn \frac{W^2}{L^2}}.
	\end{equation}
	When both   $n\gg L^2/W^2$ and $n\gg \log L$  and further 
	\begin{equation} \label{supbound3}
		\begin{aligned}
			\left|\sum_{\|k\|_2>0}\left(\hat{p}_1(k)\right)^ne^{-2\pi i \sum_{j=1}^d\frac{k_jx_j}{L}}\right|
			& \leq O\Big(e^{-cn \frac{W^2}{L^2}}\Big)
			+e^{n\log \rho+O(\log L)}=o(1),
		\end{aligned}
	\end{equation}
	from which   the proof in the  supercritical regime is  completed. % The desired result immediately follows.  
	
	Next, we turn to the subcritical and critical regimes.  First,  rewrite $p_n(x)$ in $\eqref{IFT}$ as a sum of three parts 
\begin{align}
p_n(x)
	&=\frac{1}{N}\bigg(\sum_{\|k\|_2<\eta \frac{L}{W}}+\sum_{\eta \frac{L}{W}\leq\|k\|_2\leq \delta \frac{L}{W}}+\sum_{\|k\|_2>\delta \frac{L}{W}} \bigg)(\hat{p}_1(k))^n  e^{-2\pi i \sum_{j=1}^{d}\frac{k_jx_j}{L }}\\
	&=:\frac{1}{N}\left(\Sigma_1+\Sigma_2+\Sigma_3\right)   \label{sigma123},
	\end{align}
where $\eta=n^{-\frac{1}{2}+\delta_0}$ with $0<\delta_0<1/6$.
%By  Lemma \ref{lem2parts} and

For  $\Sigma_1$, by Lemma \ref{Taylorlocal}, 
%\begin{equation}\label{duality}\theta(z,\Gamma)=\frac{1}{ (\sqrt{\det \Gamma})^{d}}  \sum_{k\in \mathbb{Z}^d} \exp \{-\pi <(k+z)\Gamma^{-1},(k+z)>  \},\end{equation}
we have 
\begin{align} \label{sigma-1}
\Sigma_1&= \big(1+o(n\eta^3)\big)
\sum_{\|k\|_2<\eta \frac{L}{W}} 
\exp\{ -2\pi^2n W^2/L^2<k\Gamma_{L},k>-2\pi  i <x/L,k>\}\nonumber\\
&=\big(1+o(n\eta^3)\big) \bigg(
\theta\big(x/L,  2\pi n W^2/L^2\Gamma_{L}\big) 
-O\Big(\sum_{\|k\|_2\geq \eta \frac{L}{W}} \exp\{-2\pi^2n\sigma_{L}^2\}\Big) \bigg).
% -\sum_{\|k\|_2\geq \eta \frac{L_0}{W}} \exp\{-2\pi^2n\sigma_{L}^2-2\pi  i <kD^{-1}_L,x>\} 
\end{align}
% \sum_{k\in \mathbb{Z}^d }e^{-2\pi^2 \frac{nW^2\gamma_2}{L_0^2}
%\sum_{j=1}^dk_j^2}
%e^{-2\pi i \sum_{j=1}^d\frac{k_jx_j}{L_0}}
To obtain the bound  of the big O term  in  \eqref{sigma-1},  keep the first few terms and bound the remaining sum  by an integral  because of the  monotonically decreasing     function,     by \eqref{p1k-001},  we have 
\begin{align}
&\sum_{\|k\|_2\geq \eta \frac{L}{W}} \exp\{-2\pi^2n\sigma_{L}^2\}\leq   \sum_{\|k\|_2\geq \eta \frac{L}{W}} e^{-cn \frac{W^2}{L^2 }\|k\|_{2}^2}\nonumber\\
& \leq  \sum_{\eta \frac{L}{W}\leq \|k\|_2\leq \eta \frac{L}{W}+\sqrt{d}}  e^{-cn \frac{W^2}{L^2} \|k\|_{2}^2}+
\int_{\|k\|_2\geq \eta \frac{L}{W}}  e^{-cn \frac{W^2}{L^2} \|k\|_{2}^2}dk \nonumber \\
& \leq C \Big(\big(\eta\sqrt{n}+ \frac{\sqrt{d n}W}{L}\big)^d +1\Big) \Big(\frac{L}{\sqrt{n}W} \Big)^d e^{-c\eta^2n},
\end{align}
for some $C>0$. Since  $1\ll \eta \sqrt{n}\leq \eta L/W$,   for   $nW^2 \ll L^2$  or $nW^2 \sim L^2$  we can  show that  
\begin{align} \label{sigma1-001}
\sum_{\|k\|_2\geq \eta \frac{L}{W}} \exp\{-2\pi^2n\sigma_{L}^2\}  =\theta\big(x/L,  2\pi n W^2/L^2 \Gamma_{L}\big)o(1),
\end{align}
whenever  $\|x\|_2 \ll n^{2/3}W$. 
With \eqref{sigma-1} and \eqref{sigma1-001} in mind, we obtain \begin{equation}\label{sigmacri}
\Sigma_1=\big(1+o(n\eta^3)\big) \theta\big(x/L,  2\pi n W^2/L^2 \Gamma_{L}\big).
\end{equation}

The well-known reciprocity formula\cite{bl61}  gives us
\begin{align} 
&\theta\big(x/L,  2\pi n W^2/L^2 \Gamma_{L}\big)=\notag\\
&\frac{N}{ (\sqrt{2\pi nW^2\det \Gamma_L})^{d}}  \sum_{k\in \mathbb{Z}^d} \exp \{-\frac{1}{2nW^2} 
<(kL+x)\Gamma_L^{-1},(kL+x)>  \}.\label{dualeq}
\end{align}
When $nW^2 \ll L^2$,  noting that $|x_j/L|\leq 1/2$ for all $j$ and using lemma \ref{lem2parts},  we see that  the term of $k=0$ in \eqref{dualeq} is dominant and   thus  
\begin{align} \label{dualityeq1}
&\theta\big(x/L,  2\pi n W^2/L^2 \Gamma_{L}\big)= 
\frac{N}{ (\sqrt{2\pi nW^2\det \Gamma_L})^{d}} e^{-\frac{1}{2nW^2} 
	<x\Gamma_L^{-1},x>}   \Big( 1+o(1)\Big).
\end{align}
When $nW^2 \sim L^2$, we can see that all terms of the summation in \eqref{dualeq} should be retained.

For $\Sigma_2$ and $\Sigma_3$ in \eqref{sigma123}, whenever  $\|x\|_2 \ll n^{2/3}W$ and $n\gg \log W$,  by Lemma \ref{lem2parts},   

\begin{equation} \label{sigma2}
\Big(\frac{\sqrt{n}W}{L} \Big)^d 
e^{O\big(\frac{\|x\|_2^2}{nW^2}\big)}	\Big|\Sigma_2\Big| \leq C
e^{-cn\eta^2+d\log(\sqrt{nW})+O\big(\frac{\|x\|_2^2}{nW^2}\big)}=o(1),
\end{equation}
while 
\begin{equation} \label{sigma3}
\Big(\frac{\sqrt{n}W}{L} \Big)^d 
e^{O\big(\frac{\|x\|_2^2}{nW^2}\big)}	\Big|\Sigma_3\Big| \leq C
e^{n\log \rho +d\log(\sqrt{nW})+O\big(\frac{\|x\|_2^2}{nW^2}\big)}=o(1),
\end{equation}
since  $\log\rho<0$.  This shows that they  are negligible compared to the leading term $\Sigma_1$.

Combining 
\eqref{sigmacri}, \eqref{dualityeq1}, \eqref{sigma2} and \eqref{sigma3}, we thus   complete the proof in the subcritical and critical  regimes.
\end{proof}

\begin{acknow}
We would like to thank Xin Chen and Guangyi Zou for their valuable discussions.
This work was supported by the National Natural Science Foundation of China \#12371157
and \#12090012.

\end{acknow}
%\bibliographystyle{plain}
%\bibliography{ref}

\end{document}